\newcommand{\widecheck}{\check}
\title{An Efficient Approach for Computing Optimal Low-Rank Regularized Inverse Matrices}
\author{Julianne Chung  \\  Virginia Tech \\ Department of Mathematics\\ Blacksburg, VA, USA \\ \email{jmchung@vt.edu} \and Matthias Chung \\ Virginia Tech \\ Department of Mathematics\\ Blacksburg, VA, USA \\ \email{mcchung@vt.edu}\\ \\}
\begin{document}

\maketitle

\begin{abstract}
% Main Points:
% 	- new framework- don't need forward model, use training data
% 	- statistical framework (Bayes and empirical Bayes risk)
% 	- efficient rank-update approach to compute orim
% 	- predicting errors and uncertainties
% 
%
Standard regularization methods that are used to compute solutions to ill-posed inverse problems require knowledge of the forward model.  In many real-life applications, the forward model is not known, but training data is readily available. In this paper, we develop a new framework that uses training data, as a substitute for knowledge of the forward model, to compute an optimal low-rank regularized inverse matrix directly, allowing for very fast computation of a regularized solution.  We consider a statistical framework based on Bayes and empirical Bayes risk minimization to analyze theoretical properties of the problem.  We propose an efficient rank update approach for computing an optimal low-rank regularized inverse matrix for various error measures.  Numerical experiments demonstrate the benefits and potential applications of our approach to problems in signal and image processing.  
% An empirical Bayes risk minimization framework will be used to incorporate training data and to formulate the problem of computing an optimal regularized inverse matrix. Computing such a matrix is challenging, especially for large-scale problems, but sophisticated stochastic and numerical optimization methods that can impose constraints and take advantage of parallel computing will be investigated. Advanced tools will be developed for error prediction and uncertainty quantification, and real data from application scientists will be used to validate the methods developed in this project.
				
% 	Goal: Solve inverse problems where forward model is not known or only partially known
% 	Avoid including the forward model in the problem formulation, learns the information from provided training data
% 	Compute low-rank regularized inverse matrices that can then be used to solve inverse problems
% 	
% In this paper, we describe an efficient approach to compute low-rank regularized inverse approximations.  We propose a rank-$\ell$ update approach, which is motivated from theory developed for Bayes risk minimization.  For Bayes risk minimization, we show that a rank-$\ell$ update is optimal and describe some situations when numerical methods for computing the update may be required.  Then we consider the empirical Bayes risk minimization problem for different error measures, describe an efficient rank-$\ell$ update approach, and provide numerical results for problems from image processing.
\end{abstract}

%Uncomment for PACS numbers title message
% \pacs{00.00, 20.00, 42.10}
% \vspace{2pc}
\noindent{\it Keywords}: ill-posed inverse problems, regularization, low-rank approximation, TSVD, Bayes risk, empirical Bayes risk, machine learning\\
% Uncomment for Submitted to journal title message
% \submitto{\IP}

\section{Introduction} \label{sec:introduction} % (fold)
% \tia{
% - problem statement\\
% - Bayes + empirical risk\\
% - connection to previous work
% }
Inverse problems arise in scientific applications such as biomedical imaging, geophysics, computational biology, computer graphics, and security \cite{Engl2000,Vogel2002,Ramm2004,Tarantola2005,Hansen2006,Kirsch2011,Aster2012,Neto2012}.  Assume that $\bfA \in \bbR^{m \times n}$ and $\bfb \in \bbR^{m}$ are given, then the linear \emph{inverse problem} can be written as
\begin{equation}\label{eq:inverseproblem}
	\bfA \bfxi + \bfdelta = \bfb,
\end{equation}
where $\bfxi\in\bbR^n$ is the desired solution and $\bfdelta\in\bbR^m$ is additive noise.  
% Solving this inverse problem becomes challenging when assuming that the underlying model is ill-posed \cite{Tenorio2001}.  
We assume the underlying problem is \emph{ill-posed}.  A problem is ill-posed if the solution does not exist, is not unique, or does not depend continuously on the data \cite{Hadamard1923}.  A main challenge of ill-posed inverse problems is that small errors in the data may result in large errors in the reconstruction.  In order to obtain a more meaningful reconstruction, regularization in the form of prior knowledge is needed to stabilize the inversion process. 

Standard regularization methods require knowledge of $\mxA$ \cite{Hansen1998,Vogel2002}.  However, in many applications, the operator $\mxA$ may not be readily available, or is only partially known.  In this work, we investigate an approach that avoids including $\bfA$ in the problem formulation, and instead learns the necessary information from training or calibration data.  Assume training data $\bfb^{(k)}, \bfxi^{(k)}$ for $k = 1,\ldots, K$, for~\eqref{eq:inverseproblem} are given.  First, our goal is to find a rank-$r$ matrix $\bfZ \in \bbR^{n \times m}$ that gives a small error for the given training set.  That is, the \emph{sample mean error}, 
\begin{equation}
\label{eqn:samlemeanerror}
f_K(\bfZ) = \frac{1}{K} \sum_{k=1}^K e_k(\bfZ) =\frac{1}{K} \sum_{k=1}^K \rho \left(\bfZ \bfb^{(k)} - \bfxi^{(k)}\right)
\end{equation} 
should be small for some given error measure $\rho:\bbR^n \to \bbR^+_0$, e.g., $\rho(\bfx) =\norm[p]{\bfx}^p$. Here, $e_k(\bfZ)= \rho \left(\bfZ \bfb^{(k)} - \bfxi^{(k)}\right)$ refers to the \emph{sample error} for a single observation $k$. 
In this paper, we consider efficient methods for obtaining an \emph{optimal low-rank regularized inverse matrix},
\begin{equation}
	\label{eqn:empiricalmin}
	\widehat \bfZ = \argmin_{\rank{\bfZ} \leq r} \frac{1}{K} \sum_{k = 1}^K \rho\left(\bfZ \bfb^{(k)} - \bfxi^{(k)}\right)\,.
\end{equation}
Then, once matrix $\widehat \bfZ$ is computed, solving the inverse problem~\eqref{eq:inverseproblem} requires a simple matrix-vector multiplication: $\bfxi = \widehat \bfZ \bfb$. Matrix $\widehat \bfZ$ can be precomputed, so that regularized solutions to inverse problems can be obtained efficiently and accurately.

Methods that incorporate training data have been widely used in machine learning \cite{Mohri2012,AbuMostafa2012} and have been analyzed in the statistical learning literature \cite{Vapnik1998}.  
 % Methods from stochastic optimization \cite{}.
% We will show in Section \ref{sec:background} that problem \eqref{eqn:empiricalmin} can been considered an empirical Bayes risk minimization problem.
Most previous work on using training data for solving inverse problems has been restricted to minimizing the predictive mean squared error and require knowledge of the forward model \cite{Chung2011,Chung2012}.  In this paper, we provide a novel framework to solve inverse problems that does not require knowledge of the forward model.  We develop efficient methods to compute an optimal low-rank regularized inverse matrix for a variety of error measures.  In addition, we provide a natural framework for evaluating solutions and predicting uncertainties.

The paper is structured as follows.  In Section~\ref{sec:background} we provide background on regularization of inverse problems and discuss the connection to Bayes risk minimization. Some theoretical results are provided for the corresponding Bayes risk minimization problem.  In Section~\ref{sec:rank} we describe numerical methods to efficiently solve~\eqref{eqn:empiricalmin}, including a rank-update approach that can be used for various error measures and large-scale problems.  Numerical results are presented in Section~\ref{sec:numerical_results} and conclusions are provided in Section~\ref{sec:conclusions}.

% section introduction (end)
%!TEX root = orim.tex
\section{Background and Motivation} \label{sec:background} % (fold)
% \tia{Background on optimal low rank regularized inverse matrice
% - Inverse problems, regularization, TSVD
% - Bayes risk and epirical Bayes risk (theory for LR problems)
% mention special case 2norm}

The singular value decomposition (SVD) is an important tool for analyzing regularization methods. For any real $m \times n$ matrix $\mxA$, let $\mxA = \mxU \bfSigma \mxV\t$ be the SVD, where $\bfSigma$ is a diagonal matrix containing the singular values, $\sigma_1 \geq \sigma_2 \geq ...\geq \sigma_{\rank{\bfA}} > 0,$ and orthogonal matrices $\mxU$ and $\mxV$ contain the left and right singular vectors $\bfu_i\,, i=1,2,...,m,$ and $\bfv_i,\, i=1,2,...,n$ respectively \cite{Golub1996}.  For a given positive integer $r\leq \mbox{rank}(\mxA)$, define $$\bfA_r = \bfU_r \bfSigma_r \bfV_r\t,$$ where $\mxU_r = [\bfu_1, \bfu_2, ... \bfu_r] \in \bbR^{m \times r},\, \mxV_r = [\bfv_1, \bfv_2, ... \bfv_r] \in \bbR^{n \times r}$, and 
$$\bfSigma_r = 
\begin{bmatrix}
\sigma_1 & & & \\
& \sigma_2&&\\
&&\ddots& \\
&&& \sigma_r
\end{bmatrix} \in \bbR^{r \times r}.$$  
Then, we denote $\bfA_r^\dagger = \bfV_r \bfSigma_r^{-1} \bfU_r\t$ to be the pseudoinverse of $\bfA_r$ \cite{Stewart1998}.

For ill-posed inverse problems, the singular values of $\bfA$ decay to and cluster at zero.  Spectral filtering methods provide solutions where components of the solution corresponding to the small singular values are damped or neglected. For example, the rank-$r$ truncated SVD (TSVD) solution can be written as
\begin{equation}
	\label{eqn:TSVD}
	\bfxi_{\rm TSVD}  = \bfA_r^\dagger \bfb\,.
\end{equation} 
In Figure~\ref{fig:tsvd2}, we provide TSVD sample mean errors for different values of $r$ for a image deblurring problem.  For TSVD, the choice of rank is crucial. For small rank, there are not enough singular vectors to capture the details of the solution, but for large rank, small singular values are included, and noise corrupts the solution. This is an illustrative example, so details are not specified here.  See Section~\ref{sec:numerical_results} for more thorough numerical investigations.  
	\begin{figure}[tbp]
		\begin{center}
		\includegraphics[width = \textwidth]{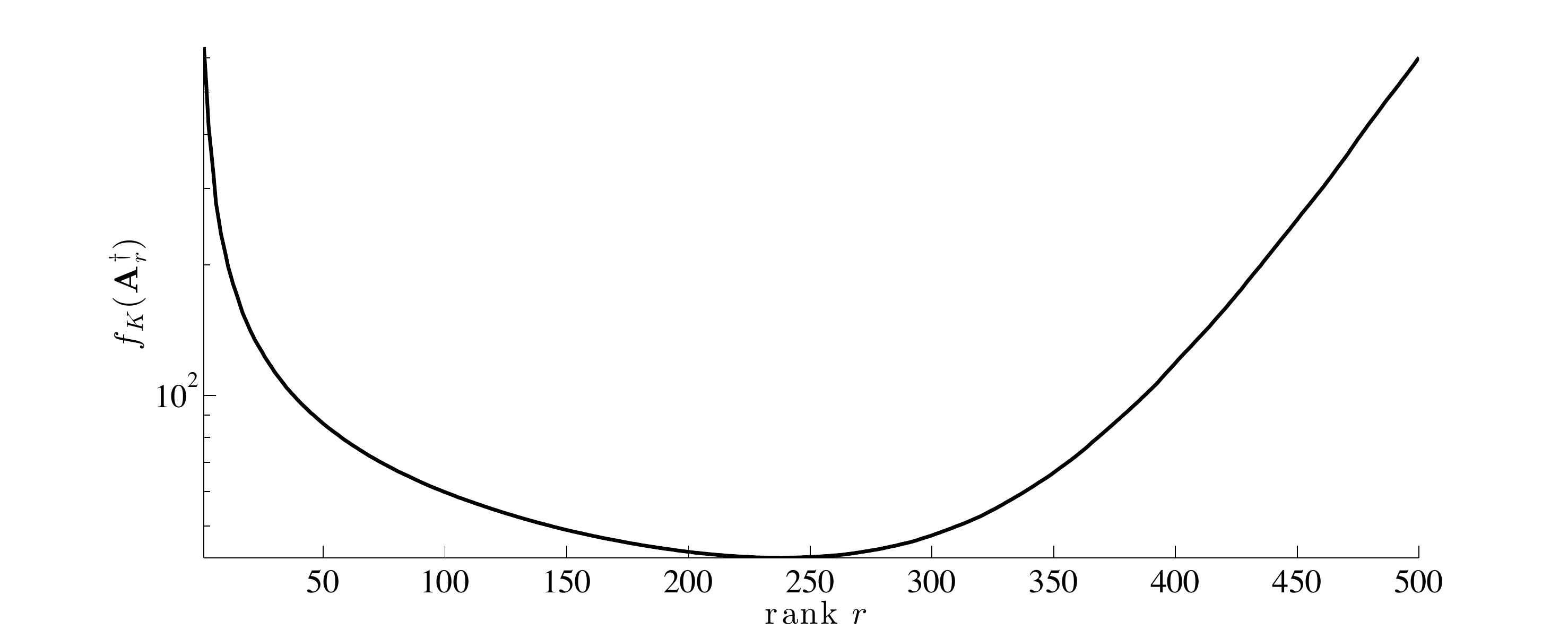}
		\end{center}
		\caption{Illustration of a typical sample mean error $f_K(\bfA^\dagger_r)$ on a logarithmic scale for Truncated SVD reconstructions for various ranks $r$.}
		\label{fig:tsvd2}
		\end{figure}

It is worth noting that the TSVD reconstruction matrix $\bfA_r^\dagger$ is \emph{one} example of a rank-$r$ matrix that solves~\eqref{eq:inverseproblem}.  In this paper, we seek an \emph{optimal} rank-$r$ matrix for all training data.  Computing a solution to problem~\eqref{eqn:empiricalmin} is non-trivial, and in the next section we provide some underlying theory from a Bayes risk perspective.

\subsection{Bayes risk minimization} % (fold)
\label{sub:bayes_risk_minimization}
% Suppose we have a set of possible signals $\bfXi \subseteq \bbR^{n}$ and a set of possible noise samples $\bfDelta \in \bbR^{m}$.  Assume that the training data is given by $\bfb^{(k)} = \mxA \bfxi^{(k)}+ \bfdelta^{(k)}$, where $\bfxi^{(k)} \in \bfXi$ was independently selected according to probability distribution $\calP_\xi$ and $\bfdelta^{(k)} \in \bfDelta$ was independently selected according to probability distribution $\calP_\delta$. 
% Then optimization problem~\eqref{eqn:empiricalmin} can be interpreted as an empirical Bayes risk minimization problem.
By interpreting problem~\eqref{eqn:empiricalmin} as an empirical Bayes risk minimization problem \cite{Carlin2000,Vapnik1998, Shapiro2009}, we can provide a statistical framework for analyzing the problem and motivate numerical algorithms for computing an optimal low-rank regularized inverse matrix.
In the following we assume that $\bfxi$ and $\bfdelta$ are random variables with given probability distributions $\calP_{\bfxi}$ and $\calP_{\bfdelta}$. Using the fact that $\bfZ \bfb -\bfxi = (\bfZ \bfA - \bfI)\bfxi + \bfZ \bfdelta$,
the \emph{Bayes risk minimization problem} can be written as,
\begin{equation}
	\label{eqn:Bayesmin}
\widecheck \bfZ =	\argmin_{\rank{\bfZ} \leq r} \ \bbE_{\bfxi,\bfdelta} \left(\rho((\bfZ \bfA - \bfI) \bfxi + \bfZ \bfdelta) \right) \, ,
\end{equation}
where $\bbE_{\bfxi,\bfdelta}$ is the expected value with respect to the joint distribution of $\bfxi$ and $\bfdelta$ \cite{Carlin2000,Vapnik1998}. The particular choices of $\rho,$ $\calP_{\bfxi},$ and $\calP_{\bfdelta}$ in~\eqref{eqn:Bayesmin} determine an optimal reconstruction matrix $\widecheck\bfZ$. Optimization problem~\eqref{eqn:empiricalmin} can be interpreted as an empirical Bayes risk minimization problem corresponding to Bayes risk minimization problem~\eqref{eqn:Bayesmin}, where $\bfxi^{(k)}$ and $\bfb^{(k)} = \bfA \bfxi^{(k)}+ \bfdelta^{(k)}$ denote realizations of the random variables	 $\bfxi$ and $\bfdelta$.

\subsection{Bayes Risk Minimization for the 2-norm case} % (fold)
\label{sub:Bayes_2_norm_case}
For the special case where $\rho(\bfx) =\norm[2]{\bfx}^2,$ the \emph{Bayes risk} can be written as 
\begin{equation}\label{eq:bayesRisk}
	\tilde f(\bfZ) = \bbE_{\bfxi,\bfdelta} \left(\norm[2]{(\bfZ \bfA - \bfI_n) \bfxi + \bfZ \bfdelta}^2 \right).
\end{equation} 
In this section, we provide closed form solutions for both the unconstrained and rank-constrained Bayes risk minimization problems for the 2-norm case.

% Without loss of generality, we assume distribution $\calP_{\bfDelta}$ has zero mean and the covariance matrix is a multiple of the identity matrix, $\bfC_{\bfdelta}=\alpha \mxI$.  For distribution $\calP_{\bfXi}$, we assume zero mean and positive definite covariance matrix, $\bfC_{\bfxi} = \bfM \bfM\t$.
We assume that the random variables $\bfxi$ and $\bfdelta$ are statistically independent.  
Let $\bfmu_{\bfxi}$ be the mean and $\bfC_{\bfxi}$ be the covariance matrix
for the distribution $\calP_{\bfxi}$.  Similarly,  $\bfmu_{\bfdelta}$
and $\bfC_{\bfdelta}$ are the mean and covariance matrix for $\calP_{\bfdelta}$.
We assume that both matrices are positive definite and that all entries
are finite.
One fact that will be useful to us is that the expectation of the quadratic form $\bfphi\t \bfK \bfphi$ involving a symmetric matrix $\bfK$ and a random variable $\bfphi$ with covariance matrix $\bfC_{\bfphi}$ can be written as
% One fact will be useful to us:
% the expectation of the quadratic form $\bfphi\t \bfK \bfphi$
% involving a random variable $\bfphi$ drawn from a set
% $\bfPhi \subseteq \bbR^{n}$ with distribution $\calP_{\bfphi}$
% and covariance matrix $\bfC_{\bfphi}$ can be written as \cite{Seber2003}
\[
\bbE_{\bfphi}(\bfphi\t \bfK \bfphi) = \bbE_{\bfphi}(\bfphi)\t \bfK \ \bbE_{\bfphi}(\bfphi) + \trace{\bfK \bfC_{\bfphi}},
\]
where $\trace{\,\cdot\,}$ denotes the trace of a matrix \cite{Seber2003}.
With this framework and the Cholesky decompositions
$\bfC_{\bfxi} = \bfM_{\bfxi} \bfM_{\bfxi}\t$  
and $\bfC_{\bfdelta} = \bfM_{\bfdelta} \bfM_{\bfdelta}\t$,
we can write the Bayes risk~\eqref{eq:bayesRisk} as,
\begin{align*}
	\tilde f(\bfZ) &= \bbE_{\bfxi,\bfdelta} \left(\norm[2]{(\bfZ \bfA - \bfI_n) \bfxi + \bfZ \bfdelta}^2 \right) \\
&= \bbE_{\bfxi} \left( \bfxi\t (\bfZ\bfA-\bfI_n)\t (\bfZ\bfA-\bfI_n)\bfxi \right) + 2 \ \bbE_{\bfdelta}( \bfdelta\t) \ \bfZ\t(\bfZ\bfA-\bfI_n) \ \bbE_{\bfxi}\left(\bfxi\right) + \bbE_{\bfdelta} \left( \bfdelta\t \bfZ\t\bfZ \bfdelta\right) \\
&= \bfmu_{\bfxi}\t (\bfZ\bfA-\bfI_n)\t (\bfZ\bfA-\bfI_n) \bfmu_{\bfxi} + \trace{(\bfZ\bfA-\bfI_n)\t (\bfZ\bfA-\bfI_n) \bfM_{\bfxi} \bfM_{\bfxi}\t}\\
&  + 2 \bfmu_{\bfdelta}\t  \bfZ\t(\bfZ\bfA-\bfI_n)  \bfmu_{\bfxi}
 + \bfmu_{\bfdelta}\t \bfZ\t\bfZ \bfmu_{\bfdelta} + \trace{\bfZ\t \bfZ \bfM_{\bfdelta} \bfM_{\bfdelta}\t}\\ 
&=  \norm[2]{(\bfZ\bfA-\bfI_n) \bfmu_{\bfxi}}^2 + \norm[\rm F]{(\bfZ\bfA-\bfI_n)\bfM_{\bfxi}}^2 + 2 \bfmu_{\bfdelta}\t  \bfZ\t(\bfZ\bfA-\bfI_n)  \bfmu_{\bfxi}
 + \norm[2]{\bfZ \bfmu_{\bfdelta}}^2 + \norm[\rm F]{\bfZ \bfM_{\bfdelta}}^2,
\end{align*}
% \begin{eqnarray*}
% 	\tilde f(\bfZ) &=& \bbE_{\bfxi,\bfdelta} \left(\norm[2]{(\bfZ \bfA - \bfI_n) \bfxi + \bfZ \bfdelta}^2 \right) \\
% &=& \bbE_{\bfxi} \left( \bfxi\t (\bfZ\bfA-\bfI_n)\t (\bfZ\bfA-\bfI_n)\bfxi \right) + 2 \ \bbE_{\bfdelta}( \bfdelta\t) \ \bfZ\t(\bfZ\bfA-\bfI_n) \ \bbE_{\bfxi}\left(\bfxi\right) + \bbE_{\bfdelta} \left( \bfdelta\t \bfZ\t\bfZ \bfdelta\right) \\
% &=& \bfmu_{\bfxi}\t (\bfZ\bfA-\bfI_n)\t (\bfZ\bfA-\bfI_n) \bfmu_{\bfxi} + \trace{(\bfZ\bfA-\bfI_n)\t (\bfZ\bfA-\bfI_n) \bfM_{\bfxi} \bfM_{\bfxi}\t}\\
% & & + 2 \bfmu_{\bfdelta}\t  \bfZ\t(\bfZ\bfA-\bfI_n)  \bfmu_{\bfxi}
%  + \bfmu_{\bfdelta}\t \bfZ\t\bfZ \bfmu_{\bfdelta} + \trace{\bfZ\t \bfZ \bfM_{\bfdelta} \bfM_{\bfdelta}\t}\\ 
% &= & \norm[2]{(\bfZ\bfA-\bfI_n) \bfmu_{\bfxi}}^2 + \norm[\rm F]{(\bfZ\bfA-\bfI_n)\bfM_{\bfxi}}^2 \\
% &&+ 2 \bfmu_{\bfdelta}\t  \bfZ\t(\bfZ\bfA-\bfI_n)  \bfmu_{\bfxi}
%  + \norm[2]{\bfZ \bfmu_{\bfdelta}}^2 + \norm[\rm F]{\bfZ \bfM_{\bfdelta}}^2,
% \end{eqnarray*}
where $\norm[\rm F]{\, \cdot \,}$ denotes the Frobenius norm of a matrix.

We can simplify this final expression 
by {\em pre-whitening} the noise $\bfdelta$, thereby converting to a coordinate
system where the noise mean is zero and the
noise covariance is a multiple of the identity.
Define the random variable 

\begin{equation}
	\bfchi = \bfM_{\bfdelta}^{-1}(\bfdelta - \bfmu_{\bfdelta})\,.
\end{equation}
Then, $\bfmu_{\bfchi} = \bfzero$ and $\bfC_{\bfchi} = \mxI_m,$ and we have
\begin{align}
	\bfM_{\bfdelta}^{-1} \bfA \bfxi + \bfchi & = \bfM_{\bfdelta}^{-1}(\bfb-\bfmu_{\bfdelta})\,,
\end{align}
which can be written as,
\begin{align}
	\widetilde \bfA \bfxi + \bfchi &= \widetilde \bfb\,,
\end{align}
where $\widetilde \bfA = \bfM_{\bfdelta}^{-1} \bfA$ and 
$\widetilde \bfb = \bfM_{\bfdelta}^{-1}(\bfb-\bfmu_{\bfdelta}).$  
Thus, without loss of generality
(although with the need for numerical care if $\bfM_{\bfdelta}$ is 
ill-conditioned), we can always assume that $\bfmu_{\delta} = \bfzero$ and $\bfC_{\bfdelta} = \eta^2 \mxI_m$.
Therefore, our Bayes risk (\ref{eq:bayesRisk}) takes the form
\begin{equation}
\tilde f(\bfZ) = \norm[2]{(\bfZ\bfA-\bfI_n) \bfmu_{\bfxi}}^2 + \norm[\rm F]{(\bfZ\bfA-\bfI_n)\bfM_{\bfxi}}^2
 + \eta^2\norm[\rm F]{\bfZ}^2. \label{eq:Bayesrisk}
\end{equation} 

Under certain conditions, summarized in the following
theorem, 
there exists a unique global minimizer for (\ref{eq:Bayesrisk}).

\begin{theorem}\label{thm:genBays}
Given $\bfA \in \bbR^{m \times n}$, $\bfM_{\bfxi} \in \bbR^{n \times n}$, $\bfmu_{\bfxi} \in \bbR^n$ and $\eta \in \bbR$, 
consider the problem of finding a matrix $\widecheck \bfZ \in \bbR^{n \times m}$ with
\begin{equation}
	\widecheck \bfZ = \argmin_{\bfZ} % \tilde f(\bfZ) = 
\norm[2]{(\bfZ\bfA-\bfI_n) \bfmu_{\bfxi}}^2 + \norm[\rm F]{(\bfZ\bfA-\bfI_n)\bfM_{\bfxi}}^2 + \eta^2\norm[\rm F]{\bfZ}^2.
\end{equation}
If either $\eta \ne 0$ or if $\bfA$ has rank $m$ and
$\bfmu_{\bfxi} \bfmu_{\bfxi}\t + \bfM_{\bfxi} \bfM_{\bfxi}\t$ is nonsingular,
then the unique global minimizer is
\begin{equation}
	\widecheck \bfZ = (\bfmu_{\bfxi} \bfmu_{\bfxi}\t + \bfM_{\bfxi} \bfM_{\bfxi}\t) \bfA\t [\bfA (\bfmu_{\bfxi} \bfmu_{\bfxi}\t + \bfM_{\bfxi} \bfM_{\bfxi}\t) \bfA\t  +   \eta^2 \bfI_m  ]^{-1}.
\end{equation}
\end{theorem}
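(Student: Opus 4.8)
The plan is to treat this as a matrix-valued Tikhonov least-squares problem and to exploit the convex quadratic structure of the objective. First I would introduce the shorthand $\bfW = \bfmu_{\bfxi}\bfmu_{\bfxi}\t + \bfM_{\bfxi}\bfM_{\bfxi}\t$, which is symmetric positive semidefinite since it is a sum of outer products. Using $\norm[2]{\bfv}^2 = \trace{\bfv\bfv\t}$ for a vector $\bfv$ together with the trace representation of the Frobenius norm, the first two terms of the objective combine into a single quadratic form,
\begin{equation*}
\norm[2]{(\bfZ\bfA-\bfI_n)\bfmu_{\bfxi}}^2 + \norm[\rm F]{(\bfZ\bfA-\bfI_n)\bfM_{\bfxi}}^2 = \trace{(\bfZ\bfA-\bfI_n)\bfW(\bfZ\bfA-\bfI_n)\t},
\end{equation*}
so that $\tilde f(\bfZ) = \trace{(\bfZ\bfA-\bfI_n)\bfW(\bfZ\bfA-\bfI_n)\t} + \eta^2\norm[\rm F]{\bfZ}^2$. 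This displays $\tilde f$ as a convex quadratic function of the entries of $\bfZ$, which is the structural fact I would rely on throughout.

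Next I would compute the matrix gradient. Expanding the trace and using the symmetry of $\bfW$, the standard matrix-calculus rules give $\nabla_{\bfZ}\tilde f = 2(\bfZ\bfA-\bfI_n)\bfW\bfA\t + 2\eta^2\bfZ$. Setting the gradient to zero and rearranging yields the normal equation $\bfZ(\bfA\bfW\bfA\t + \eta^2\bfI_m) = \bfW\bfA\t$. Whenever $\bfA\bfW\bfA\t + \eta^2\bfI_m$ is invertible this determines a unique candidate, $\widecheck\bfZ = \bfW\bfA\t(\bfA\bfW\bfA\t + \eta^2\bfI_m)^{-1}$, which is precisely the claimed formula. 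Because $\tilde f$ is convex, any stationary point is automatically a global minimizer, so the remaining task is to certify invertibility and strict convexity under each of the two sets of hypotheses.

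For invertibility I would split into the two cases. The matrix $\bfA\bfW\bfA\t$ is symmetric positive semidefinite. If $\eta \ne 0$, then $\bfA\bfW\bfA\t + \eta^2\bfI_m \succeq \eta^2\bfI_m \succ 0$, hence invertible. If instead $\eta = 0$ but $\bfA$ has rank $m$ and $\bfW$ is nonsingular, then $\bfW$ is in fact positive definite and admits a symmetric positive definite square root $\bfW^{1/2}$; writing $\bfA\bfW\bfA\t = (\bfA\bfW^{1/2})(\bfA\bfW^{1/2})\t$ shows it has the same rank as $\bfA$, namely $m$, so this $m\times m$ matrix has full rank and is invertible. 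In either case $\bfA\bfW\bfA\t + \eta^2\bfI_m$ is symmetric positive definite. Strict convexity then follows because the homogeneous quadratic part of $\tilde f$ equals $\trace{\bfZ(\bfA\bfW\bfA\t + \eta^2\bfI_m)\bfZ\t} = \norm[\rm F]{\bfZ(\bfA\bfW\bfA\t + \eta^2\bfI_m)^{1/2}}^2$, which is strictly positive for every $\bfZ \ne \bfzero$. Hence the stationary point is the unique global minimizer.

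The individual steps are routine, so the main obstacle is the bookkeeping in the two invertibility arguments --- in particular making the rank and positive-definiteness reasoning precise in the $\eta = 0$ case, where no regularizer can be absorbed --- together with verifying strict (rather than merely non-strict) convexity so that uniqueness, and not only existence, is justified. Some care is also needed to keep the gradient computation honest, since the cross terms arising from $(\bfZ\bfA-\bfI_n)\bfW(\bfZ\bfA-\bfI_n)\t$ must be differentiated using the symmetry of $\bfW$.
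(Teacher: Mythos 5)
Your proof is correct and takes essentially the same approach as the paper's: form the matrix gradient, set it to zero to obtain the normal equation $\bfZ\,[\bfA(\bfmu_{\bfxi}\bfmu_{\bfxi}\t + \bfM_{\bfxi}\bfM_{\bfxi}\t)\bfA\t + \eta^2\bfI_m] = (\bfmu_{\bfxi}\bfmu_{\bfxi}\t + \bfM_{\bfxi}\bfM_{\bfxi}\t)\bfA\t$, and conclude via invertibility of the bracketed matrix together with (strict) convexity. The only difference is that you spell out the two-case invertibility argument and the strict-convexity/uniqueness details, which the paper merely asserts.
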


\begin{proof}
	Using derivatives of norms and traces, we get
	\begin{equation}
		\frac{\d \tilde f(\bfZ)}{\d\bfZ} = 2 (\bfZ \bfA - \bfI_n) \bfmu_{\bfxi} \bfmu_{\bfxi}\t \bfA\t + 2 (\bfZ \bfA - \bfI_n) \bfM_{\bfxi} \bfM_{\bfxi}\t \bfA\t +  2 \eta^2 \bfZ.
	\end{equation}
	Setting the above derivative equal to zero gives the following first order condition,
	\begin{equation}
		 \bfZ [\bfA (\bfmu_{\bfxi} \bfmu_{\bfxi}\t + \bfM_{\bfxi} \bfM_{\bfxi}\t) \bfA\t  +  \eta^2 \bfI_m  ]= (\bfmu_{\bfxi} \bfmu_{\bfxi}\t + \bfM_{\bfxi} \bfM_{\bfxi}\t) \bfA\t .
	\end{equation}
	Under our assumptions, the matrix $\bfA (\bfmu_{\bfxi} \bfmu_{\bfxi}\t + \bfM_{\bfxi} \bfM_{\bfxi}\t) \bfA\t  +  \eta^2 \bfI_m$ is invertible, 
so we get the unique solution $\widecheck \bfZ$ that is given in the statement of the Theorem.
It is a minimizer since the second derivative matrix of $\tilde f (\bfZ)$ is positive definite.
\end{proof}

To summarize, we have minimized the Bayes risk (\ref{eq:bayesRisk})
under the assumptions that $\bfxi$ and $\bfdelta$ are statistically independent random variables with symmetric positive definite finite covariance matrices.

Next we consider the rank-constrained Bayes risk minimization problem.
With the additional assumption that $\bfmu_{\bfxi} = \bfzero$, problem~\eqref{eqn:Bayesmin} can be written as
\begin{equation}
	\label{eqn:thm1}
		\min_{\rank{\bfZ} \leq r} \ \norm[\rm F]{(\bfZ \bfA - \bfI)\bfM}^2 + \alpha^2 \norm[\rm F]{\bfZ}^2,
\end{equation}
where $\bfM = \bfM_{\bfxi}$ and $\alpha = \eta.$
A closed form solution to~\eqref{eqn:thm1} was recently provided in~\cite{Chung2014b}, and the result is summarized in the following theorem.
\begin{theorem} \label{thm:general}
Given a matrix $\bfA \in \bbR^{m\times n}$ where $\rank{\bfA} \le n \le m$
and an invertible matrix $\bfM \in \bbR^{n \times n}$,
let their generalized singular value decomposition be
$\bfA = \bfU \bfSigma \bfG^{-1}$, $\bfM = \bfG \bfS^{-1} \bfV\t$.
Let $\alpha$ be a given parameter, nonzero if $\rank{\bfA} < m$.
Let $r \le \rank{\bfA}$ be a given positive integer.
Define $\bfD = \bfSigma \bfS^{-2} \bfSigma\t + \alpha^2 \bfI$.
Let the symmetric matrix
$\bfH = \bfG \bfS^{-4} \bfSigma\t \bfD^{-1} \bfSigma \bfG\t$
have eigenvalue decomposition 
$\bfH = \widecheck \bfV \bfLambda \widecheck \bfV\t$
with eigenvalues ordered so that 
$\lambda_j \geq \lambda_i$ for $j < i \le n$.
Then a global minimizer $\widecheck \bfZ \in \bbR^{n \times m}$ of problem \eqref{eqn:thm1}
is
\begin{equation} \label{eq:zhat}
	\widecheck \bfZ = \widecheck \bfV_r \widecheck \bfV_r\t \bfG \bfS^{-2}
	       \bfSigma\t \bfD^{-1} \bfU\t,
\end{equation}
where $\widecheck \bfV_r$ contains the first $r$ columns of $\widecheck \bfV$.
Moreover this $\widecheck \bfZ$ is the {\em unique} global minimizer of~\eqref{eqn:thm1} if and only if $\lambda_r > \lambda_{r+1}$.
\end{theorem}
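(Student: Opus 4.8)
The plan is to reduce the rank-constrained problem~\eqref{eqn:thm1} to a standard (unweighted) best rank-$r$ approximation problem, solve that with the Eckart--Young theorem, and then translate the answer back through the generalized singular value decomposition. First I would rewrite the two ingredients of the objective using $\bfA = \bfU\bfSigma\bfG^{-1}$ and $\bfM = \bfG\bfS^{-1}\bfV\t$. Since $\bfV$ is orthogonal, $\bfM\bfM\t = \bfG\bfS^{-2}\bfG\t$, and since $\bfU$ is orthogonal a short computation gives
\[
\bfB := \bfA\bfM\bfM\t\bfA\t + \alpha^2\bfI = \bfU\bfD\bfU\t,
\]
so that $\bfD$ is precisely the diagonal spectral factor of $\bfB$, which is positive definite under the stated hypothesis on $\alpha$ and $\rank{\bfA}$. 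Writing $g(\bfZ)$ for the objective in~\eqref{eqn:thm1} and expanding it in traces, I would complete the square in $\bfZ$. Because $g$ is a strictly convex quadratic whose curvature is governed by $\bfB$, this yields $g(\bfZ) = \norm[\rm F]{(\bfZ - \bfZ_0)\bfB^{1/2}}^2$ plus a constant independent of $\bfZ$, where $\bfZ_0 = \bfM\bfM\t\bfA\t\bfB^{-1}$ is the unconstrained minimizer supplied by Theorem~\ref{thm:genBays} (with $\bfmu_{\bfxi} = \bfzero$). Substituting the GSVD factors collapses this to $\bfZ_0 = \bfG\bfS^{-2}\bfSigma\t\bfD^{-1}\bfU\t$, which is exactly the trailing factor appearing in~\eqref{eq:zhat}.

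Discarding the additive constant, the problem becomes $\min_{\rank{\bfZ} \le r}\norm[\rm F]{(\bfZ - \bfZ_0)\bfB^{1/2}}^2$. The key structural observation is that the weight $\bfB^{1/2}$ is invertible and appears on only one side, so $\bfZ \mapsto \bfZ\bfB^{1/2}$ is a rank-preserving bijection and the problem is equivalent to finding the best rank-$r$ approximation of $\bfZ_0\bfB^{1/2}$ in the ordinary Frobenius norm. By the Eckart--Young theorem this minimizer is $\bfP\,\bfZ_0\bfB^{1/2}$, where $\bfP$ is the orthogonal projector onto the leading $r$-dimensional left singular subspace of $\bfZ_0\bfB^{1/2}$, equivalently the top-$r$ eigenspace of $\bfZ_0\bfB^{1/2}(\bfZ_0\bfB^{1/2})\t = \bfZ_0\bfB\bfZ_0\t$. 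Undoing the change of variables then gives $\widecheck\bfZ = \bfP\,\bfZ_0$.

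It remains to identify $\bfP$ with $\widecheck\bfV_r\widecheck\bfV_r\t$ and to settle uniqueness. Substituting the GSVD factors and using that $\bfSigma$, $\bfS$, $\bfD$, and hence $\bfSigma\t\bfD^{-1}\bfSigma$ are all diagonal and therefore commute, I would verify by direct calculation that
\[
\bfZ_0\bfB\bfZ_0\t = \bfG\bfS^{-2}\bfSigma\t\bfD^{-1}\bfSigma\bfS^{-2}\bfG\t = \bfG\bfS^{-4}\bfSigma\t\bfD^{-1}\bfSigma\bfG\t = \bfH.
\]
Thus the leading $r$ eigenvectors of $\bfH$ supply the projector $\bfP = \widecheck\bfV_r\widecheck\bfV_r\t$, and $\widecheck\bfZ = \widecheck\bfV_r\widecheck\bfV_r\t\bfG\bfS^{-2}\bfSigma\t\bfD^{-1}\bfU\t$ as claimed in~\eqref{eq:zhat}. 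Uniqueness is inherited from Eckart--Young, whose best rank-$r$ approximation is unique exactly when the $r$-th and $(r+1)$-th singular values of $\bfZ_0\bfB^{1/2}$ are distinct, that is, when $\lambda_r > \lambda_{r+1}$.

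I expect the principal obstacle to be the matrix completion-of-the-square step together with the recognition that the resulting weighting is purely one-sided; it is precisely this one-sidedness that lets an otherwise genuinely hard weighted low-rank approximation problem reduce to Eckart--Young. The remaining effort is bookkeeping: confirming the diagonal commutation that produces the identity $\bfZ_0\bfB\bfZ_0\t = \bfH$, and checking that the eigenvectors $\widecheck\bfV$ of the symmetric matrix $\bfH$ form an orthonormal basis, so that $\widecheck\bfV_r\widecheck\bfV_r\t$ really is the orthogonal projector demanded by Eckart--Young even though $\bfH$ is expressed through the non-orthogonal factor $\bfG$.
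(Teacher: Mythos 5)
Your proposal is correct, but note that there is no in-paper proof to compare it against: the paper states Theorem~\ref{thm:general} as a quoted result and defers its proof entirely to the reference \cite{Chung2014b}, just as it proves Theorem~\ref{thm:Sondermann} by citation to \cite{Sondermann1986,Friedland2007}. Your argument is therefore a genuinely self-contained alternative, and it holds up: completing the square gives the objective as $\norm[\rm F]{(\bfZ-\bfZ_0)\bfB^{1/2}}^2$ plus a constant, with $\bfB = \bfA\bfM\bfM\t\bfA\t+\alpha^2\bfI = \bfU\bfD\bfU\t$ positive definite under the stated hypotheses and $\bfZ_0 = \bfM\bfM\t\bfA\t\bfB^{-1} = \bfG\bfS^{-2}\bfSigma\t\bfD^{-1}\bfU\t$ the unconstrained minimizer of Theorem~\ref{thm:genBays}; since the invertible weight sits on one side only, $\bfZ\mapsto\bfZ\bfB^{1/2}$ is a rank-preserving bijection and Eckart--Young \cite{Eckart1936} finishes the job, with the identity $\bfZ_0\bfB\bfZ_0\t = \bfG\bfS^{-2}\bfSigma\t\bfD^{-1}\bfSigma\bfS^{-2}\bfG\t = \bfH$ (diagonal factors commute) identifying the required projector as $\widecheck\bfV_r\widecheck\bfV_r\t$. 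Conceptually, your reduction also explains why this Bayes-risk problem is tractable at all: it is of exactly the one-sided type $\min_{\rank{\bfZ}\le r}\norm[\rm F]{\bfZ\bfB-\bfC}^2$ that underlies Theorem~\ref{thm:Sondermann}, rather than a genuinely two-sided weighted low-rank problem. Two small points to make the write-up airtight. First, the ``unique iff $\sigma_r>\sigma_{r+1}$'' form of Eckart--Young uniqueness requires $\sigma_r>0$ (if $\sigma_r=\sigma_{r+1}=0$ the matrix is its own unique best rank-$r$ approximation); here this is guaranteed because $\rank{\bfZ_0\bfB^{1/2}} = \rank{\bfZ_0} = \rank{\bfA}\ge r$, using invertibility of $\bfM\bfM\t$ and $\bfB$, so you should state that explicitly. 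Second, your computation $\bfB^{-1}=\bfU\bfD^{-1}\bfU\t$ uses that $\bfU$ in the GSVD $\bfA=\bfU\bfSigma\bfG^{-1}$ is the full $m\times m$ orthogonal factor (so that $\bfSigma$ is $m\times n$ and $\bfD$ is $m\times m$); that is consistent with the theorem's definition of $\bfD$, but it deserves a sentence since some GSVD conventions take $\bfU$ with only $n$ orthonormal columns.
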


Next we consider the case where $\bfC_{\bfxi} = \beta^2 \bfI_n$ for some scalar $\beta$.  In this case the Bayes risk simplifies to \begin{equation}
\tilde f(\bfZ)= \beta^2 \norm[\rm F]{\bfZ\bfA - \bfI_n}^2 + \eta^2 \norm[\rm F]{\bfZ}^2, \label{eq:bayesRisk1}
\end{equation}
and, by rescaling, the low-rank Bayes risk minimization problem can be restated as optimization problem~\eqref{eqn:thm1}
where $\alpha^2 = \frac{\eta^2}{\beta^2}$ represents the noise-to-signal ratio
and $\bfM = \bfI_n$.  It was shown in \cite{Chung2014b} that for this problem, the global minimizer simplifies to  
\begin{equation}
	\widecheck \bfZ = \bfV_r \bfPsi_r \bfU_r\t ,
\end{equation}
where $\bfPsi_r = \diag{\frac{\sigma_1}{\sigma_1^2+ \alpha^2}, \ldots, \frac{\sigma_J}{\sigma_r^2+ \alpha^2}}$ and $\bfV_r$ and $\bfU_r$ were defined in the beginning of this section. Moreover, $\widecheck \bfZ$ is the {\em unique} global minimizer if and only if $\sigma_r > \sigma_{r+1}$.  For problems where $\bfA$ is known, matrix $\widecheck \bfZ$ can be used to compute a filtered solution to inverse problems.  That is, 
$$ \bfx_{\rm filter} \equiv \widecheck \bfZ \bfb = \sum_{j=1}^n \phi_j \frac{\bfu_j\t \bfb}{\sigma_j}\bfv_j, $$
where the filter factors are given by
\begin{equation*}
	\phi_j = \left\{\begin{array}{cl}\frac{\sigma_j^2}{\sigma_j^2 + \alpha^2}, & \mbox{ for } j \leq r, \\ 0, & \mbox{ for }j>r. \end{array}\right. 
\end{equation*}
Here, $\widecheck \bfZ$ is the \emph{truncated Tikhonov} reconstruction matrix \cite{Chung2014b}. 

% subsection bayes_risk_minimization (end)
% section background (end)

\section{Computing Low-rank Regularized Inverse Matrices} \label{sec:rank} % (fold)
In this section, we describe numerical methods to efficiently compute a low-rank regularized inverse matrix, i.e., a solution to \eqref{eqn:empiricalmin}, for problems where matrix $\bfA$ is unknown and training data are available.  We first describe a method that can be used for the special case where $\rho = \norm[2]{\, \cdot \,}^2$.  Further we discuss a rank-update approach that can be utilized for large-scale problems and for general error measures.

\subsection{2-norm case} % (fold)
\label{sub:2_norm_case}
For the case $\rho = \norm[2]{\, \cdot \,}^2$, we  can use the relationship between the 2-norm of a vector and the Frobenius norm of a matrix to rewrite optimization problem~\eqref{eqn:empiricalmin} as
\begin{equation}\label{eq:min2norm}
 	\widehat \bfZ = \argmin_{\rank{\bfZ}\leq r} \frac{1}{K}\sum_{k=1}^{K} \norm[2]{\bfZ\bfb^{(k)}-\bfxi^{(k)}}^2 = \frac{1}{K}\norm[\rm F]{\bfZ\bfB -\bfC}^2\,,
 \end{equation} 
where $\bfB = [\bfb^{(1)},\ldots, \bfb^{(K)}]$ and $\bfC = [\bfxi^{(1)},\ldots, \bfxi^{(K)}]$. A closed form solution for~\eqref{eq:min2norm} exists, and the result is summarized in the following theorem.
% A close form solution for~\eqref{eq:min2norm} exist and is given by the following theorem.
% Let $\bfP = \bfC \tilde \bfV_b\tilde \bfV_b\t$ where rank $b = rank(\bfB)$ and $\bfV$ the matrix of the right singular vectors of $\bfB$. Let further 

% Let $\bfB^\dagger$ be the pseudo-inverse of $\bfB$ and let $\bfU_\bfB \bfSigma_\bfB \bfV_\bfB\t$ be the singular value decomposition of $\bfB$ with $\bfSigma_\bfB = \diag{\sigma_1,\ldots,\sigma_{\min(m,K)}}$ where $\sigma_1 \geq \ldots \geq \sigma_{\min(m,K)}$ and with $\bfU_\bfB = [\bfu_1,\ldots,\bfu_m]$ and $\bfV_\bfB = [\bfv_1,\ldots,\bfv_K]$. Further let $\bfP = \sum_{i= 1}^{\rank{\bfB}}\bfv_i \bfv_i\t$ and let denote the $\bfB_J = \sum_{j = 1}^J \sigma_j \bfu_j \bfv_j\t$ for $J< \rank{\bfB}$ and $\bfB_J = \bfB$ otherwise.
\begin{theorem}
	\label{thm:Sondermann}
	Let matrices $\bfB\in\bbR^{m\times K}$ and $\bfC\in\bbR^{n\times K}$ be given. Further let $\tilde \bfV$ be the matrix of right singular vectors of $\bfB$ and define $\bfP = \bfC \tilde \bfV_s (\tilde \bfV_s)\t$ where $s = \rank{\bfB}$. Then
	$$ \widehat\bfZ = \bfP_r \bfB^{\dagger}$$
	is a solution to the minimization problem~\eqref{eq:min2norm}. This solution is unique if and only if either $r \geq \rank{\bfP}$ or 
	$ 1\leq r \leq \rank{\bfP}$ and $\bar \sigma_r > \bar \sigma_{r+1}$, where $\bar \sigma_r$ and $\bar \sigma_{r+1}$  denote the $r$ and $(r+1)$st singular values of $\bfP$.
\end{theorem}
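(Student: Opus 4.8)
The plan is to turn this matrix least--squares problem into a classical best rank-$r$ approximation problem, solve that with the Eckart--Young--Mirsky theorem, and then translate the solution and its uniqueness back to $\bfZ$. First I would write the full SVD $\bfB = \tilde\bfU\tilde\bfSigma\tilde\bfV\t$ and partition $\tilde\bfV = [\tilde\bfV_s\ \tilde\bfV_0]$, where the trailing columns $\tilde\bfV_0$ span $\mathrm{null}(\bfB)$. Since the Frobenius norm is invariant under right multiplication by the orthogonal matrix $\tilde\bfV$ and $\bfB\tilde\bfV_0 = \bfzero$, the objective separates as
\[
\norm[\rm F]{\bfZ\bfB - \bfC}^2 = \norm[\rm F]{\bfZ\bfB\tilde\bfV_s - \bfC\tilde\bfV_s}^2 + \norm[\rm F]{\bfC\tilde\bfV_0}^2,
\]
and the last term does not depend on $\bfZ$. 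Writing $\bfB\tilde\bfV_s = \tilde\bfU_s\tilde\bfSigma_s$ with $\tilde\bfSigma_s$ invertible, I would substitute $\bfY = \bfZ\tilde\bfU_s\tilde\bfSigma_s$. As $\bfZ$ ranges over all matrices of rank at most $r$, so does $\bfY$, and conversely any admissible $\bfY$ is attained by $\bfZ = \bfY\tilde\bfSigma_s^{-1}\tilde\bfU_s\t$ of the same rank; the problem therefore reduces to $\min_{\rank{\bfY}\le r} \norm[\rm F]{\bfY - \bfC\tilde\bfV_s}^2$.

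By the Eckart--Young--Mirsky theorem the minimizer is the truncated SVD $\bfY = (\bfC\tilde\bfV_s)_r$. To match the stated closed form I would observe that appending the orthonormal-row factor $(\tilde\bfV_s)\t$ on the right leaves singular values and left singular vectors unchanged, so $\bfP = \bfC\tilde\bfV_s(\tilde\bfV_s)\t$ and $\bfC\tilde\bfV_s$ have the same $\bar\sigma_i$ and $\bfP_r = (\bfC\tilde\bfV_s)_r(\tilde\bfV_s)\t$. Substituting back, using $\bfB^\dagger = \tilde\bfV_s\tilde\bfSigma_s^{-1}\tilde\bfU_s\t$ and $(\tilde\bfV_s)\t\tilde\bfV_s = \bfI$, gives
\[
\bfP_r\bfB^\dagger = (\bfC\tilde\bfV_s)_r\tilde\bfSigma_s^{-1}\tilde\bfU_s\t = \bfY\tilde\bfSigma_s^{-1}\tilde\bfU_s\t = \widehat\bfZ,
\]
which confirms the formula.

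For uniqueness I would invoke the uniqueness half of Eckart--Young--Mirsky: the truncation $(\bfC\tilde\bfV_s)_r$ is the unique optimal rank-$r$ approximation exactly when the residual already vanishes, i.e. $r \ge \rank{\bfC\tilde\bfV_s} = \rank{\bfP}$, or when the spectral gap $\bar\sigma_r > \bar\sigma_{r+1}$ holds; otherwise one may rotate within the tied singular subspace to produce a distinct minimizer. The step I expect to be the main obstacle is transferring this characterization from $\bfY$ to $\bfZ$. The reduced objective constrains $\bfZ$ only through its action on the column space of $\bfB$ (through $\bfZ\tilde\bfU_s$), so the correspondence $\bfZ\leftrightarrow\bfY$ is a genuine bijection, and uniqueness transfers cleanly, precisely when $\bfB$ has full row rank; otherwise rank-preserving perturbations supported on $\mathrm{null}(\bfB\t)$ may be added without changing the objective, and $\widehat\bfZ = \bfP_r\bfB^\dagger$ should be read as the distinguished representative whose rows lie in the row space of $\bfB$. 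Making this bookkeeping precise, and checking that the gap condition is necessary as well as sufficient, is where the careful work lies.
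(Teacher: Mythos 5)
Your proposal is correct in its main line, but it takes a genuinely different route from the paper, because the paper contains no derivation at all: its ``proof'' of Theorem~\ref{thm:Sondermann} is a citation, noting the result is a special case of Sondermann \cite{Sondermann1986} and of Theorem 2.1 of Friedland and Torokhti \cite{Friedland2007}. Your reduction --- splitting the objective along $[\tilde\bfV_s,\ \tilde\bfV_0]$, discarding the $\bfZ$-independent term $\norm[\rm F]{\bfC\tilde\bfV_0}^2$, substituting $\bfY = \bfZ\tilde\bfU_s\tilde\bfSigma_s$, applying Eckart--Young--Mirsky, and translating back via $\bfP_r = (\bfC\tilde\bfV_s)_r(\tilde\bfV_s)\t$ and $\bfB^\dagger = \tilde\bfV_s\tilde\bfSigma_s^{-1}\tilde\bfU_s\t$ --- is sound and makes the theorem self-contained. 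Beyond self-containedness, your approach buys something the citation hides, and it is precisely the point you flag at the end: the correspondence $\bfZ\leftrightarrow\bfY$ is a bijection on rank-$\le r$ matrices only when $\rank{\bfB}=m$. When $\rank{\bfB}<m$, the minimizer of \eqref{eq:min2norm} is \emph{never} unique for $r\ge 1$: one can always add a nonzero perturbation whose rows lie in the null space of $\bfB\t$ without increasing the rank beyond $r$ or changing the objective. A one-line counterexample: $m=2$, $K=1$, $n=1$, $\bfB = [1,\ 0]\t$, $\bfC = [1]$, $r=1$; every $\bfZ = [1,\ z]$, $z\in\bbR$, is a minimizer, yet $r \ge \rank{\bfP} = 1$, so the stated uniqueness condition holds. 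The resolution is that Friedland and Torokhti assert uniqueness of the solution of \emph{minimal Frobenius norm} (equivalently, the solution whose row space lies in the column space of $\bfB$, which is exactly what $\bfP_r\bfB^\dagger$ is), a qualifier dropped in the paper's restatement. So to close your proof you should either add the hypothesis $\rank{\bfB}=m$ or prove uniqueness within the class of solutions satisfying $\bfZ = \bfZ\,\tilde\bfU_s\tilde\bfU_s\t$; on that class your map $\bfZ\mapsto\bfY$ \emph{is} a rank-preserving bijection, so the Eckart--Young--Mirsky uniqueness characterization ($r\ge\rank{\bfP}$ or $\bar\sigma_r>\bar\sigma_{r+1}$) transfers verbatim and your argument is complete. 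In short: your proof is right, more informative than the paper's citation, and the ``bookkeeping'' you worried about is not a defect of your argument but a genuine imprecision in the theorem as stated.
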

\begin{proof}
	This result is a special case of the main result of Sondermann \cite{Sondermann1986} and was reproved with if-and-only-if uniqueness conditions in Theorem 2.1 of Friedland and Torokhti \cite{Friedland2007}.
\end{proof}

The main computational cost of constructing $\widehat \bfZ$ using Theorem \ref{thm:Sondermann} lies in two SVD computations (for matrices $\bfB$ and $\bfP$).  If these computations are possible and minimizing the mean squared error is desired, then we propose to use the above result to compute optimal low-rank regularized inverse matrix $\widehat \bfZ$, which in turn can be used to solve inverse problems \cite{Chung2013a}.  
% subsection 2_norm_case (end)
% Previous work - If Z has structure - optimal filter
% In the following we want to assume for convenience that a unique global minimizer for Corollary~\ref{coro:thm} exist.  

\subsection{A rank-update approach} % (fold)
\label{sec:rank_update}
For general error measures or for large problems with large training sets, we propose a rank-update approach for obtaining optimal low-rank regularized inverse matrix $\widehat \bfZ$.  The approach is motivated by the following corollary to Theorem \ref{thm:general}.
\begin{corollary}\label{coro:thm}
Assume all conditions of Theorem~\ref{thm:general} are fulfilled. Let $\widecheck \bfZ^J$ be a global minimizer of~\eqref{eqn:thm1} for maximal rank $J$ and $\widecheck \bfZ^{J+\ell}$ be a global minimizer of~\eqref{eqn:thm1} for maximal rank $J+\ell$. Then $\bar \bfZ^{\ell} = \widecheck \bfZ^{J+\ell}-\widecheck \bfZ^J$ is the global minimizer of 
	\begin{equation}\label{eq:updateFormula}
		\min_{\rank{\bfZ}\leq \ell}\norm[\rm F]{((\widecheck\bfZ^J+\bfZ)\bfA-\bfI)\bfM}^2 + \alpha^2\norm[\rm F]{\widecheck\bfZ^J+\bfZ}^2.
	\end{equation}
	Furthermore, $\bar\bfZ^{\ell}$ is the unique global minimizer if and only if $\lambda_J > \lambda_{J+1}$ and $\lambda_{J+\ell} > \lambda_{J+\ell+1}$ with $J+\ell+1<\rank{\bfA}$.
\end{corollary}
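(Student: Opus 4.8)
The plan is to reparametrize \eqref{eq:updateFormula} so that it becomes an instance of the rank-constrained problem \eqref{eqn:thm1} already solved by Theorem~\ref{thm:general}, and then to exploit the fact that the minimizers furnished by that theorem are \emph{nested}. Throughout I take $\widecheck\bfZ^J$ and $\widecheck\bfZ^{J+\ell}$ to be the minimizers given by the explicit formula \eqref{eq:zhat}. Write $F(\bfY)=\norm[\rm F]{(\bfY\bfA-\bfI)\bfM}^2+\alpha^2\norm[\rm F]{\bfY}^2$ for the objective of \eqref{eqn:thm1}; substituting $\bfY=\widecheck\bfZ^J+\bfZ$ shows that the objective of \eqref{eq:updateFormula} is exactly $F(\widecheck\bfZ^J+\bfZ)$. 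As $\bfZ$ ranges over all matrices of rank at most $\ell$, the matrix $\widecheck\bfZ^J+\bfZ$ ranges over the affine set $\widecheck\bfZ^J+\mathcal{R}_\ell$, where $\mathcal{R}_p$ denotes the set of matrices of rank at most $p$. Since $\rank{\widecheck\bfZ^J}\le J$, subadditivity of rank gives $\rank{\widecheck\bfZ^J+\bfZ}\le J+\ell$, so
\[
\widecheck\bfZ^J+\mathcal{R}_\ell\ \subseteq\ \mathcal{R}_{J+\ell}.
\]
Consequently, for every feasible $\bfZ$ we obtain the lower bound $F(\widecheck\bfZ^J+\bfZ)\ge\min_{\rank{\bfY}\le J+\ell}F(\bfY)=F(\widecheck\bfZ^{J+\ell})$.

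Next I would show that this lower bound is \emph{attained} at $\bfZ=\bar\bfZ^\ell$, i.e. that $\widecheck\bfZ^{J+\ell}$ itself lies in $\widecheck\bfZ^J+\mathcal{R}_\ell$. This is where the explicit formula \eqref{eq:zhat} enters: both minimizers share the common right factor $\bfW=\bfG\bfS^{-2}\bfSigma\t\bfD^{-1}\bfU\t$, which is independent of the target rank, so that $\widecheck\bfZ^J=\widecheck\bfV_J\widecheck\bfV_J\t\bfW$ and $\widecheck\bfZ^{J+\ell}=\widecheck\bfV_{J+\ell}\widecheck\bfV_{J+\ell}\t\bfW$. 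Because the columns of $\widecheck\bfV$ are orthonormal,
\[
\bar\bfZ^{\ell}=\widecheck\bfZ^{J+\ell}-\widecheck\bfZ^{J}=\bigl(\widecheck\bfV_{J+\ell}\widecheck\bfV_{J+\ell}\t-\widecheck\bfV_{J}\widecheck\bfV_{J}\t\bigr)\bfW=\Bigl(\textstyle\sum_{i=J+1}^{J+\ell}\widecheck\bfv_i\widecheck\bfv_i\t\Bigr)\bfW,
\]
so $\rank{\bar\bfZ^\ell}\le\ell$ and $\bar\bfZ^\ell$ is feasible. Since $\widecheck\bfZ^J+\bar\bfZ^\ell=\widecheck\bfZ^{J+\ell}$ attains $F(\widecheck\bfZ^{J+\ell})$, the lower bound of the first paragraph is met and $\bar\bfZ^\ell$ is a global minimizer of \eqref{eq:updateFormula}.

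For uniqueness I would argue as follows. If $\bfZ^\ast$ is any global minimizer of \eqref{eq:updateFormula}, then $\widecheck\bfZ^J+\bfZ^\ast$ is feasible for \eqref{eqn:thm1} at rank $J+\ell$ and attains its minimum, hence is a global minimizer of the rank-$(J+\ell)$ problem. When $\lambda_{J+\ell}>\lambda_{J+\ell+1}$, Theorem~\ref{thm:general} makes that minimizer unique, forcing $\widecheck\bfZ^J+\bfZ^\ast=\widecheck\bfZ^{J+\ell}$ and therefore $\bfZ^\ast=\bar\bfZ^\ell$. The second gap condition $\lambda_J>\lambda_{J+1}$ guarantees, again by Theorem~\ref{thm:general}, that $\widecheck\bfZ^J$ is itself the unique rank-$J$ minimizer, so that \eqref{eq:updateFormula} and the difference $\bar\bfZ^\ell$ are unambiguously defined and the nesting used above is forced. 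The index restriction $J+\ell+1<\rank{\bfA}$ ensures $\lambda_{J+\ell}$ and $\lambda_{J+\ell+1}$ lie in the positive part of the spectrum of $\bfH$ (whose rank equals $\rank{\bfA}$), so that a strict gap is meaningful; the converse direction follows because a repeated eigenvalue at either index $J$ or $J+\ell$ produces a continuum of admissible minimizers.

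The hard part will be the degenerate case of repeated generalized singular values, i.e. $\lambda_J=\lambda_{J+1}$ or $\lambda_{J+\ell}=\lambda_{J+\ell+1}$. There the optimal spectral subspaces are no longer determined, and an \emph{arbitrary} choice of the rank-$J$ and rank-$(J+\ell)$ minimizers need not be nested: their difference can then have rank strictly larger than $\ell$, so that $\bar\bfZ^\ell$ fails to be feasible for \eqref{eq:updateFormula}. Pinning down exactly which choices remain nested — and hence establishing the ``if and only if'' — is the delicate point, and the substitution $\bfY=\widecheck\bfZ^J+\bfZ$ resolves it by reducing everything to the uniqueness dichotomy of Theorem~\ref{thm:general} applied separately at the two ranks $J$ and $J+\ell$.
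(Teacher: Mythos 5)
The paper never proves this corollary --- it is stated as an unproved consequence of Theorem~\ref{thm:general} --- so your proposal has to be judged on its own merits rather than against a reference argument. Your proof of the first claim is correct and is surely the intended one: substituting $\bfY=\widecheck\bfZ^J+\bfZ$ turns \eqref{eq:updateFormula} into the objective of \eqref{eqn:thm1}, rank subadditivity gives the lower bound $\min_{\rank{\bfY}\le J+\ell}$, and the fact that the two minimizers in \eqref{eq:zhat} share the right factor $\bfG\bfS^{-2}\bfSigma\t\bfD^{-1}\bfU\t$ makes $\bar\bfZ^{\ell}=\bigl(\sum_{i=J+1}^{J+\ell}\widecheck\bfv_i\widecheck\bfv_i\t\bigr)\bfG\bfS^{-2}\bfSigma\t\bfD^{-1}\bfU\t$ feasible, so the bound is attained. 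You are also right, and it is a genuine subtlety, that the statement only makes sense for nested (e.g.\ formula-\eqref{eq:zhat}) choices of the two minimizers, since in degenerate cases arbitrary choices need not have a rank-$\ell$ difference. The ``if'' half of uniqueness (pulling any minimizer of \eqref{eq:updateFormula} back to a global minimizer of the rank-$(J+\ell)$ problem) is likewise sound.

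The gap is the ``only if'' direction, which you assert in one sentence rather than prove, and the assertion as stated is inconsistent with your own argument. Your pull-back argument uses \emph{only} the gap $\lambda_{J+\ell}>\lambda_{J+\ell+1}$: for a fixed $\widecheck\bfZ^J$ it already forces the minimizer of \eqref{eq:updateFormula} to be unique, whether or not $\lambda_J>\lambda_{J+1}$. Hence a repeated eigenvalue at index $J$ does \emph{not} ``produce a continuum of admissible minimizers'' of the fixed problem \eqref{eq:updateFormula}; the necessity of $\lambda_J>\lambda_{J+1}$ can only be about well-definedness of $\bar\bfZ^{\ell}$ (several admissible $\widecheck\bfZ^J$, hence several update problems and differences), which is a different statement that you would have to formulate precisely and then prove --- e.g.\ by showing that distinct rank-$J$ minimizers genuinely yield distinct matrices $\bar\bfZ^{\ell}$. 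Even for the case $\lambda_{J+\ell}=\lambda_{J+\ell+1}$, turning the non-uniqueness of Theorem~\ref{thm:general} into non-uniqueness for \eqref{eq:updateFormula} requires two facts you do not establish: (i) feasibility, i.e.\ that every alternative rank-$(J+\ell)$ minimizer $\bfY^{\ast}$ satisfies $\rank{\bfY^{\ast}-\widecheck\bfZ^J}\le\ell$ (this holds because, when $\lambda_J>\lambda_{J+1}$, the unique optimal rank-$J$ subspace is contained in every optimal rank-$(J+\ell)$ subspace); and (ii) distinctness, i.e.\ that two different optimal subspaces are not collapsed to the same matrix when multiplied on the right by the common factor. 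Point (ii) is exactly where the hypothesis $J+\ell+1<\rank{\bfA}$ does its real work: one checks that the null space of $\bfH$ coincides with the left null space of $\bfG\bfS^{-2}\bfSigma\t\bfD^{-1}\bfU\t$, so perturbations inside the $\lambda_{J+\ell}$-eigenspace survive the multiplication precisely because $\lambda_{J+\ell+1}>0$. Without (i) and (ii), the uniqueness dichotomy of Theorem~\ref{thm:general} simply does not transfer to \eqref{eq:updateFormula}, so the ``delicate point'' you flag at the end remains open in your write-up.
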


Corollary \ref{coro:thm} states that in the Bayes framework for the 2-norm case, a rank-update approach (in exact arithmetic) can be used to compute an optimal low-rank regularized matrix. That is, assume we are given the rank-$J$ approximation $\widecheck\bfZ^J$ and we are interested in finding $\widecheck \bfZ^{J+\ell}$. To calculate the optimal rank-$(J+\ell)$ approximation $\widecheck \bfZ^{J+\ell}$, we just need to solve a rank-$\ell$ optimization problem of the form~\eqref{eq:updateFormula} and then update the solution, $\widecheck\bfZ^{J+\ell} = \widecheck \bfZ^J + \bar\bfZ^{\ell}$. 

% , but can be generalized for different choices of $\rho(\bfx)$ and to problems that do not assume an underlying probability distribution for the data. 
% Recall that the goal is to compute a solution to the rank-constrained empirical Bayes minimization problem~\eqref{eqn:empiricalmin}.  
We are interested in the empirical Bayes problem, and we extend the rank-update approach to problem~\eqref{eqn:empiricalmin}, where the basic idea is to iteratively build up matrices $\widehat \bfX \in \bbR^{n \times r}$ and $\widehat \bfY \in \bbR^{m \times r}$ such that $\widehat \bfZ = \widehat \bfX \widehat \bfY\t.$  Suppose we are given the optimal rank-$J$ matrix $\widehat \bfZ$. That is, $\widehat \bfZ$ solves~\eqref{eqn:empiricalmin} for $r=J$.  Then, we compute a rank-$\ell$ update to $\widehat \bfZ$ by solving the following rank-$\ell$ optimization problem,
\begin{equation}
\label{eqn:updateproblem}
	(\widehat \bfX^\ell, \widehat \bfY^\ell) = \argmin_{\bfX \in \bbR^{n \times \ell}, \bfY^{m \times \ell}} f_K(\bfX,\bfY) = \frac{1}{K} \sum_{k = 1}^K \rho((\widehat \bfZ+\bfX\bfY\t) \bfb^{(k)} - \bfxi^{(k)}) \,,
\end{equation}	
and a rank-$(J+\ell)$ regularized inverse matrix can be computed as $\widehat \bfZ = \widehat \bfZ+\widehat \bfX^\ell (\widehat \bfY^\ell)\t$.  This process continues until the desired rank is achieved.  A summary of the general algorithm can be found below, see Algorithm~\ref{alg:update}.

\begin{algorithm} 
\caption{Rank-$\ell$ update approach for computing a solution to~\eqref{eqn:empiricalmin}}\label{alg:update}
\begin{algorithmic}[1]
\STATE Initialize $\widehat \bfZ = \bfzero_{n \times m}$, $\widehat \bfX = [\,\,]$, $\widehat \bfY = [\,\,]$
% \WHILE{$\rank {\widehat\bfZ} \leq r$}
\WHILE{stopping criteria not satisfied}
\STATE solve rank-$\ell$ update problem~\eqref{eqn:updateproblem}
\STATE update matrix inverse approximation: $\widehat \bfZ \longleftarrow \widehat \bfZ+\widehat \bfX^\ell (\widehat \bfY^\ell)\t $
\STATE update solutions: $\widehat \bfX \longleftarrow [\widehat \bfX, \widehat \bfX^\ell],  \widehat \bfY \longleftarrow [\widehat \bfY, \widehat \bfY^\ell]$
\ENDWHILE
\end{algorithmic}
\end{algorithm}
% {\small
% \begin{center}
% \begin{tabular}{|c|}\hline \\[-6pt]
% {\bf Rank-$\ell$ update approach for computing a solution to \eqref{eqn:empiricalmin}} \\[6pt] \hline \\
% [-6pt]
% \begin{minipage}[c]{4.5in}
% \begin{tabular}{lcl}
% \multicolumn{3}{l}{Initialize $\widehat \bfZ = \bfzero_{n \times m}$, $\widehat \bfX = \{ \}$, $\widehat \bfY = \{ \}$} \\[3pt]
% \multicolumn{3}{l}{while $\rank {\widehat\bfZ} \leq r$} \\[3pt]
% & & $(1)$ solve rank-$\ell$ update problem~\eqref{eqn:updateproblem} \\ [3pt]
% & & $(2)$ update matrix inverse approximation: $\widehat \bfZ = \widehat \bfZ+\widehat \bfX^\ell (\widehat \bfY^\ell)\t $\\[3pt]
% & & $(3)$ update solutions: $\widehat \bfX = [\widehat \bfX, \widehat \bfX^\ell],  \widehat \bfY = [\widehat \bfY, \widehat \bfY^\ell]$\\[3pt]
% \multicolumn{3}{l}{end} \\
% \end{tabular}
% \end{minipage} \\ \\ \hline
% \end{tabular}
% \end{center}
% }
\noindent Next we provide some computational remarks for Algorithm~\ref{alg:update}.

\subsubsection*{Choice of $\ell$.} At each iteration of Algorithm~\ref{alg:update}, we must solve optimization problem~\eqref{eqn:updateproblem}, which has $(m+n)\,\ell$ unknowns. Hence, the choice of $\ell$ will depend on the size of the problem and may even vary per iteration, e.g., $\ell_1, \ell_2,\ldots$.  For large-scale problems such as those arising in image reconstruction, we recommend updates of size $\ell=1$ to keep optimization problem~\eqref{eqn:updateproblem} as small as possible.

\subsubsection*{Stopping criteria.}
Stopping criteria for Algorithm~\ref{alg:update} should have four components. First, a safe-guard maximal rank condition.  That is, we stop if $\rank{\widehat \bfZ}$ is greater than some pre-determined integer value $r_{\max}$. Secondly, let $f_{\max}$ be a user-defined error tolerance for the function value, then we stop if $f_K \leq f_{\max}$.
Similarly, we can monitor the relative improvement of $f_K$ in each iteration and stop if the relative improvement is smaller than some tolerance. Lastly, if a rank-$\ell$ update does not substantially improve $f_K$, the iteration should be terminated.  More specifically, let $\widehat \bfX^\ell = [\widehat\bfx_1,\ldots,\widehat\bfx_\ell]$ and choose tolerance $\tau$.  Then we can control the rank deficiency of $\bfX^\ell$ by stopping if $\min \{\norm[\infty]{\widehat\bfx_1},\ldots,\norm[\infty]{\widehat\bfx_\ell}\} \leq \tau$.  The particular choices of stopping criteria and tolerances are problem dependent.

\subsubsection*{Solving rank-$\ell$ update problem~\eqref{eqn:updateproblem}.} % (fold)
\label{sub:solving_the_rank_ell_}
The main challenge for Algorithm~\ref{alg:update} is in line~3, where we need to solve the rank-$\ell$ update problem~\eqref{eqn:updateproblem}. First, we remark that the solution $(\widehat \bfX^\ell, \widehat \bfY^\ell)$ is not unique.  In fact, $\widehat \bfX^\ell (\widehat \bfY^\ell)\t = \frac{1}{a}\widehat \bfX^\ell \,  (a\widehat \bfY^\ell)\t$ for any $a\neq 0$. As a consequence optimization problem~\eqref{eqn:updateproblem} is ill-posed and a particular solution needs to be selected. Various computational approaches exist to handle this kind of ill-posedness. One way to tackle this problem is to constrain each column in $\bfX$ to have norm $1$, i.e., $\norm[2]{\bfX_{(:,j)}} = 1$ for $j = 1,\ldots,\ell$, and solve the constrained or relaxed joint optimization problem. We found this approach to be computationally inefficient and instead propose an alternating optimization approach. Notice that for any convex function $\rho$, function $f_K(\bfX,\bfY)$ is convex in $\bfX$ and convex in $\bfY$, so an alternating optimization approach avoids the above ill-posedness.  Until convergence, the solution to $\min_{\bfY}  f_K(\bfX, \bfY)$ is computed for fixed $\bfX$, and then the solution to $\min_{\bfX}  f_K(\bfX, \bfY)$ is computed for fixed $\bfY$. Columns of $\bfX$ are normalized at each iteration.  The alternating optimization is summarized in Algorithm~\ref{alg:alter}. 

% The main challenge for Algorithm~\ref{alg:update} is line~3, where we need to solve the rank-$\ell$ update problem~\eqref{eqn:updateproblem}. First, we notice that a matrix $\bfZ$ does not have a unique rank-$\ell$ decomposition $\bfX\bfY\t$, in fact  $\bfZ = \bfX\bfY\t = \frac{1}{a}\bfX \, a \bfY\t$ for any $a\neq 0$. As a consequence optimization problem~\eqref{eqn:updateproblem} is ill-posed and a particular solution $(\widehat\bfX^\ell,\widehat\bfY^\ell)$ needs to be selected. Various computational approaches exist to handle this kind of ill-posedness. One way to tackle this problem is to select a specific solution by adding the constraints such as normalized columns in $\bfX$, i.e., $\norm[2]{\bfX_{(:,j)}} = 1$ for $j = 1,\ldots,\ell$ and solve the constrained or relaxed optimization problem. We found this approach to be computational not efficient and chose instead an alternating optimization approach. Notice that for any convex function $\rho$ the function $f_K(\bfX,\bfY)$ is convex in $\bfX$ and convex in $\bfY$. Hence an alternating optimization solving
% $\min_{\bfX}  f_K(\bfX, \bfY)$ with fixed $\bfY$ and $\min_{\bfY}  f_K(\bfX, \bfY)$ with fixed $\bfX$ avoids above ill-posedness. The alternating optimization is summarized in Algorithm~\ref{alg:alter}. 
% 
% - non uniqueness considerations

\begin{algorithm}

\caption{Alternating optimization for solving~\eqref{eqn:updateproblem}} \label{alg:alter}
\begin{algorithmic}[1]
% \REQUIRE 
\STATE choose $\bfX^{(0)} \in \bbR^{m\times \ell}$, $i = 0$
% \WHILE{still improving}
\WHILE{stopping criteria not satisfied}
\STATE Solve $\bfY^{(i+1)} = \argmin_{\bfY}  f_K(\bfX^{(i)},\bfY)$
\STATE Solve $\bfX^{(i+1)} = \argmin_{\bfX}  f_K(\bfX,\bfY^{(i+1)})$
\STATE $\bfX_{(:,j)}^{(i+1)} \longleftarrow \bfX_{(:,j)}^{(i+1)}/\norm[2]{\bfX_{(:,j)}^{(i+1)}}$ for $j = 1,\ldots,\ell$
\STATE $i \longleftarrow i+1$
\ENDWHILE
\end{algorithmic}
\end{algorithm}

Methods for solving optimization problems in lines~3 and~4 of Algorithm~\ref{alg:alter} depend on the specific form of $\rho$. For instance for $\rho = \norm[p]{\, \cdot \,}^p$ with $2 \leq p < \infty$ we suggest using Gauss-Newton type methods. 
% For error measures $\rho$ with discontinuous second derivatives such as the Huber function 
% \begin{equation}
% 	\rho(\bfx) = \sum_{j = 1}^n \psi(x_j)
% \end{equation}
% with
% \begin{equation}
% 	\psi(x) = 
% 		\begin{cases} 
% 		    \abs{x}-\frac{\eps}{2}, & \mbox{if } \abs{x}\geq\eps,\\
% 	        \frac{1}{2\eps}x^2,  &\mbox{if } \abs{x}<\eps,
% 		\end{cases} 
% \end{equation}
% for some $\eps>0$ 
For $p$-norms with $1<p<2$ we propose to use Quasi-Newton type methods such as SR1, BFGS, and LBFGS \cite{Nocedal1999} or utilize smoothing techniques to compensate for discontinuities in the derivative. Standard stopping criteria can be used for these optimization problems, as well as for Algorithm~\ref{alg:alter} \cite{Gill1981}.  The required gradients of $f_K$ and Gauss-Newton approximation to the Hessian are provided below. 

% First let $\bfe^{(k)} = \bfX\bfY\t\bfb^{(k)} - \bfc^{(k)}$ be the $k$-th residual and let $\bfrho^{(k)} = \nabla_{\bfe^{(k)}} \rho(\bfe^{(k)})$ denote the gradient and $\bfD^{(k)} = \nabla^2_{\bfe^{(k)}} \rho(\bfe^{(k)})$ the Hessian of $\rho$ in $\bfe^{(k)}$ for $k = 1,\ldots, f_K$. We further define the column vector $\bfrho = [\bfrho^{(1)};\ldots;\bfrho^{(K)}]$ and $\bfD = [\bfD^{(1)},\dots,\bfD^{(K)}]$.

Let $\bfe^{(k)} = \bfX\bfY\t\bfb^{(k)} - \bfc^{(k)}$, $\bfx = \vec{\bfX}$, and $\bfy = \vec{\bfY}$.  Then the gradients are given by
\begin{equation}
	\nabla_{\bfx}\, f_K  = \frac{1}{K} \left(\bfJ_{\bfx}\right)\t \, \bfrho \qquad \mbox{and} \qquad
	\nabla_{\bfy}\, f_K = \frac{1}{K} \left(\bfJ_{\bfy}\right)\t \, \bfrho,
\end{equation}
where $\bfrho = \vec{[\bfrho^{(1)},\ldots,\bfrho^{(K)}]}$ with $\bfrho^{(k)} = \nabla\rho$ evaluated at $\bfe^{(k)}$. The Jacobian matrices have the form,
$$ \bfJ_\bfx = \begin{bmatrix}
	\bfJ^{(1)}_{\bfx_1} & \cdots & \bfJ^{(1)}_{\bfx_\ell}\\
	  \vdots  &   & \vdots \\
	\bfJ^{(K)}_{\bfx_1} & \cdots & \bfJ^{(K)}_{\bfx_\ell}
\end{bmatrix} \qquad \mbox{and} \qquad
 \bfJ_\bfy = \begin{bmatrix}
	\bfJ^{(1)}_{\bfy_1} & \cdots & \bfJ^{(1)}_{\bfy_\ell}\\
	  \vdots  &   & \vdots \\
	\bfJ^{(K)}_{\bfy_1} & \cdots & \bfJ^{(K)}_{\bfy_\ell}
\end{bmatrix},
$$
where $ \bfJ^{(k)}_{\bfx_j} = \bfy_j\t \bfb^{(k)} \bfI_n$ and $\bfJ^{(k)}_{\bfy_j} = \bfx_j \left(\bfb^{(k)}\right)\t$. The Gauss-Newton approximations of the Hessians are given by
	\begin{equation}
	\bfH_\bfx = \frac{1}{K} \left(\bfJ_\bfx\right)\t \bfD \,\bfJ_\bfx \qquad \mbox{and} \qquad
	\bfH_\bfy = \frac{1}{K} \left(\bfJ_\bfy\right)\t \bfD \,\bfJ_\bfy, 
\end{equation}
where $\bfD = \diag{\bfD^{(1)},\ldots,\bfD^{(K)}}$ and $\bfD^{(k)} = \nabla^2 \rho$ evaluated at $\bfe^{(k)}$. Note that $\bfD$ is diagonal for any error measures $\rho$ of the form $\rho(\bfx) = \sum_{i = 1}^n \phi(x_i)$ which includes the Huber function and all $p$-norms with $p<\infty$.
% section general_measures (end)
% subsection solving_the_rank_ell_ (end)

% \tia{How to select initial guess, $\bfX^{(0)} \in \bbR^{m\times \ell}$?}

Another important remark is that for computational efficiency, we do not need to form $\widehat \bfZ$ explicitly.  That is, if we let $\bfc^{(k)} = \bfxi^{(k)} - \widehat \bfZ \bfb^{(k)},$ then~\eqref{eqn:updateproblem} can be reformulated as
	\begin{equation}
		\label{eq:EmpRiskLowRank} 
	(\widehat \bfX^\ell, \widehat \bfY^\ell) = \argmin_{\bfX \in \bbR^{n \times \ell}, \bfY^{m \times \ell}}  \frac{1}{K}\sum_{k=1}^K \rho(\bfX \bfY\t \bfb\pow{k} - \bfc\pow{k}),
	\end{equation}
	where vectors $\bfc^{(k)}$ should be updated at each iteration as $\bfc^{(k)} \longleftarrow \bfc^{(k)} - \widehat \bfX ^\ell (\widehat \bfY^\ell)\t \bfb^{(k)}.$

% Algorithm~\ref{alg:alter} requires stopping criteria for the alternating direction loop. Here, we choose typical stopping criteria from optimization \cite{Gill????}
% 
% 
%   STOP1 = abs(fOld - f) <= tol * (1 + abs(f));
%   STOP2 = norm(xOld - x,'inf') <= sqrt(tol) * (1 + norm(x,'inf'));
%   STOP3 = norm(yOld - y,'inf') <= sqrt(tol) * (1 + norm(y,'inf'));

% A variety of stopping criteria can be used for terminating the iterations of Algorithm~\ref{alg:update} and is again problem dependent. The simplest would be to impose a maximum rank condition.  That is, stop if $\rank{\widehat \bfZ}$ is greater than some pre-determined integer value $r_{max}.$  However, in many applications a desired rank $r_{max}$ is unknown and alternative stopping criteria should be chosen. One choice is to set a pre-defined average mean square error (aMSE) $f_K$ requiring an estimate of the noise level. The third option is to observe the improvement of $f_K$ in each iteration. Once a rank-$\ell$ update will not substantially improve the aMSE the iteration need to be terminated. A Pareto curve
% 
% (or combination of all three.)
% 
% or improvement.

% If updating the rank of $\widehat \bfZ$ does not improve the solution, thereby implying that a sufficient amount of information for reconstruction has been resolved, then we propose to stop the update process.  Given some user-defined tolerance, $tol$, we propose to stop if 
% \begin{equation}
% 	\label{stopcrit1}
% 	\norm{x_k - x_{k-1}} < tol
% \end{equation} 
% 
% 
% \begin{equation}
% 	\norm{\widehat \bfZ^\ell} < tol
% \end{equation}

\subsubsection*{Using $\widehat \bfZ$ to solve inverse problems.} % (fold)
\label{sub:solving_inverse_problems}
We have described some numerical approaches for computing $\widehat \bfZ$, and it is worthwhile to note that all of these computations can be done off-line.  Once computed, the optimal low-rank regularized inverse matrix $\widehat \bfZ$ can be used to solve inverse problems very quickly, requiring only one matrix-vector multiplication $\widehat\bfZ \bfb$, if $\widehat \bfZ$ is obtained explicitly, or two multiplications $\widehat\bfX (\widehat \bfY\t \bfb)$ if $\widehat \bfX$ and $\widehat \bfY$ are computed separately.
This is particularly important for applications where hundreds of inverse problems need to be solved in real time.  The ability to obtain fast and accurate reconstructions of many inverse problems in real time quickly compensates for the initial cost of constructing $\widehat \bfZ$ (or $\widehat \bfX$ and $\widehat \bfY$).

One of the significant advantages of our approach is that we seek a regularized inverse matrix $\widehat \bfZ$ directly, so that knowledge of the forward model $\bfA$ is not required.  However, for applications where $\bfA$ is desired, our framework could be equivalently used to first estimate a low-rank matrix $\bfA$ from the training data by solving Equation~\eqref{eqn:empiricalmin} where $\bfZ$ is $\bfA,$ and $\bfb^{k}$ and $\bfxi^{k}$ are switched.  This approach is similar to ideas from machine learning \cite{Baldi2001,Hastie2009} and are related to matrix completion problems where the goal is to reconstruct unknown or missing values in the forward matrix \cite{Cai2010}.  Then, once $\widehat \bfA$ is computed, standard regularization methods may be required to solve inverse problems.  If this is the case, we propose to use the training data to obtain optimal regularization parameters, as discussed in \cite{Chung2011}.  Since the ultimate goal in many applications is to compute solutions to inverse problems, obtaining an approximation of $\mxA$ is not necessary and may even introduce additional errors. Thus, if $\mxA$ is not required, our approach to compute the optimal regularized inverse matrix is more efficient and less prone to errors.

\section{Numerical Results} % (fold)
\label{sec:numerical_results}
In this section, we compute optimal low-rank regularized inverse matrices for a variety of problems and evaluate the performance of these matrices when solving inverse problems.  We generated a training set of $K$ observations, $\bfxi^{(k)}$ and $\bfb^{(k)}, k=1,\ldots,K$.  Then for various error measures, $\rho$, we compute optimal low-rank regularized inverse matrices $\widehat \bfZ$ as described in Section~\ref{sec:rank}.  To validate the performance of the computed optimal low-rank regularized inverse matrices, we generate a validation set of $\bar K$ different observations, $\bar \bfb^{(k)}, k=1,\ldots,\bar K,$ and compute the error between the reconstructed solution and the true solution $\bar \bfxi^{(k)}, k=1,\ldots,\bar K$, i.e., $e_k(\widehat \bfZ) = \rho(\widehat \bfZ \bar \bfb^{(k)} - \bar \bfxi^{(k)})$.  For each experiment, we report the sample mean error $f_K(\widehat \bfZ)$ as computed in~\eqref{eqn:samlemeanerror}, along with other descriptive statistics regarding the error distributions.

Standard regularization methods such as TSVD require knowledge of $\bfA$, while our approach requires training data.  Thus, it is difficult to provide a fair comparison of methods.  In our experiments, we compare our reconstructions to two ``standard'' reconstructions:
\begin{itemize}
	\item TSVD-$\bfA$: This refers to the TSVD reconstruction, as computed in \eqref{eqn:TSVD}, and it requires matrix $\bfA$ and its SVD.  The rank-$r$ reconstruction matrix for TSVD-$\bfA$ is given by $\bfZ = \bfA_r^\dagger.$
	\item TSVD-$\widehat \bfA$: If $\bfA$ is not available, one could estimate it from the training data by solving the following rank-constrained optimization problem,
	\begin{equation}
		\label{eqn:getAhat}
		\widehat \bfA = \argmin_{\rank{\bfA} \leq \bar r} \frac{1}{K} \sum_{k = 1}^K \rho\left(\bfA \bfxi^{(k)} - \bfb^{(k)}\right)\,,
	\end{equation}
	for some fixed integer $\bar r.$ We propose to use methods described in Section~\ref{sec:rank} to obtain matrix $\widehat \bfA$.  Then, the computed matrix $\widehat \bfA$ could be used to construct a TSVD reconstruction. We refer to this approach as TSVD-$\widehat \bfA$, and the rank-$r$ reconstruction matrix for TSVD-$\widehat \bfA$ for $r \leq \bar r$ is given by $\bfZ = \widehat\bfA_r^\dagger.$  This seems to be a fair comparison to our approach since only training data is used; however, a potential disadvantage of TSVD-$\widehat \bfA$ is that $\widehat \bfA$ can be large, and computing its SVD may be prohibitive. 
\end{itemize} 

We present results for three experiments: Experiments 1 and 2 are 1D examples, and Experiment 3 is a 2D example.

\subsection{One-dimensional examples} % (fold)
\label{sub:one_dimensional_examples}
For the 1D examples, we use random time signals by generating piecewise constant functions as seen in Figure~\ref{fig:sampleSignal}. 
Here, the number of jumps in each signal is a randomly chosen integer between 3 and 20, drawn from a uniform distribution.  The times for the jumps and the signal strength were both randomly selected in the interval $[0,1]$, also from a uniform distribution.  The discretized signal has length $n = 150.$  The corresponding blurred signals were computed as in Equation~\eqref{eq:inverseproblem}, where matrix $\bfA$ is $150 \times 150$ and represents convolution with a 1D Gaussian kernel with zero mean and variance 2, and $\bfdelta$ represents Gaussian white noise. The noise level for the blurred signals was $0.01$, which means that $\norm[2]{\bfdelta}^2 / \norm[2]{\bfA\bfxi}^2 \approx 0.01$. Training and validation signals were generated in the same manner.  Figure~\ref{fig:sampleSignal} displays four sample signals from the validation set and their corresponding observed signals.  

% Here, the number of jumps are chosen at uniform random between 3 and 20 at uniform random times in the interval $[0,1]$. Signal strength are uniformly chosen in $[0,1]$. The discretized signal is  of length $n = 150.$  The corresponding blurred signals were computed as in Equation~\eqref{eq:inverseproblem}, where matrix $\bfA$ is $150 \times 150$ and represents convolution with a 1D Gaussian kernel with zero mean and variance 2, and $\bfdelta$ represents Gaussian white noise. The noise level for the blurred signals was $0.01$, which means that $\norm[2]{\bfdelta}^2 / \norm[2]{\bfA\bfxi}^2 \approx 0.01$. Training and validation signals were generated in the same manner.  Figure~\ref{fig:sampleSignal} displays sample signals from the validation set and their corresponding blurred signal.  

\begin{figure}[bthp]
\begin{center}
\includegraphics[width=0.9\textwidth]{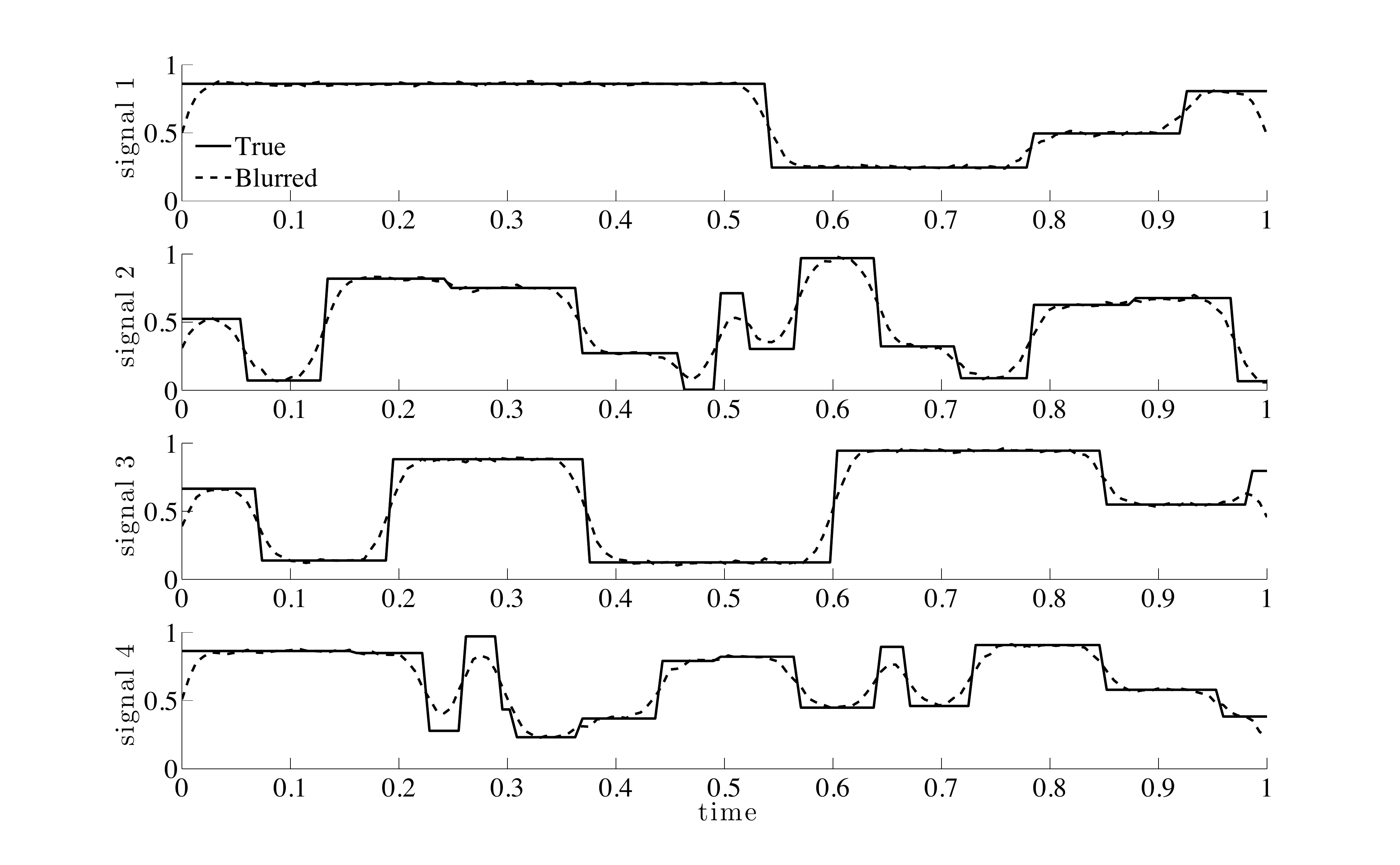}
\end{center}
\caption{Four sample validation signals and corresponding observed signals, used in Experiments 1 and 2.}
\label{fig:sampleSignal}
\end{figure}

%%%%%%%%%%%%%%%%%%%%%%%%%%%%%%%%%%%%%%%%%%%%%%%%%%%%%%%%
\paragraph{Experiment 1.} 
For this experiment, we consider the case where $\rho = \frac{1}{2}\norm[2]{\,\cdot\,}^2$ and compute an \emph{optimal regularized inverse matrix} (ORIM) as described in Section~\ref{sub:2_norm_case}. We refer to this approach as $\rm ORIM_{2}$ and we refer to the computed matrix as $\widehat \bfZ_2$. 

First, we fix the number of training signals to $K= 10,000$.  For varying ranks $r,$ we compute the sample mean error for $\rm ORIM_{2}$, TSVD-$\bfA$, and TSVD-$\widehat \bfA$ for a set of $10,000$ different validation signals.  For computing $\widehat \bfA$, $\bar r$ was fixed to $100$.  Errors plots as a function of rank are shown in Figure~\ref{fig:rankcompareORIM2TSVD_1D}. We observe that the sample mean error for ORIM$_2$ is monotonically decreasing and consistently smaller than the sample mean errors for both TSVD-$\bfA$ and TSVD-$\widehat \bfA$.  This is true for all ranks. Furthermore, both TSVD-$\bfA$ and TSVD-$\widehat \bfA$ exhibit semi-convergence behavior similar to Figure~\ref{fig:tsvd2}, whereby the error initially improves as the rank increases, but reconstructions degrade for larger ranks.  On the other hand, since regularization is built into the design and construction of $\widehat \bfZ_2$, $\rm ORIM_{2}$ avoids semi-convergence behavior altogether, as evident in the flattening of the error curve. Thus, $\rm ORIM_{2}$ solutions are less sensitive to the choice of rank.

% First, we fix the number of training signals $K= 10,000$ and compare the sample mean errors of $\rm ORIM_{2}$ with the sample mean errors of TSVD-$\bfA$ and TSVD-$\widehat \bfA$ for varying ranks $r$ for a set of $10,000$ different validation signals. Results are shown in Figure~\ref{fig:rankcompareORIM2TSVD_1D}. We observe that the sample mean error of ORIM$_2$ is monotonically decreasing with the rank and stays below the sample mean errors of TSVD-$\bfA$ and TSVD-$\widehat \bfA$ for any rank $r$. Furthermore, we find that TSVD-$\bfA$ achieves a lower sample mean error than TSVD-$\widehat \bfA$ for ranks up to $r=57$, however semi-convergence occurs later and slower for TSVD-$\widehat \bfA$. Both TSVD-$\bfA$ and TSVD-$\widehat \bfA$ exhibit semi-convergence behavior similar to Figure~\ref{fig:tsvd2}, whereby the error initially improves as the rank increases, but reconstructions degrade for larger ranks. On the other hand, since regularization is built into the design and construction of $\widehat \bfZ_2$, $\rm ORIM_{2}$ avoids semi-convergence behavior altogether, as evident in the flattening of the error curve. Thus, $\rm ORIM_{2}$ solutions are less sensitive to the choice of rank.

For ranks less than $r=57$, TSVD-$\bfA$ achieves a smaller sample mean error than TSVD-$\widehat \bfA$.  We also note that the smallest error that TSVD-$\widehat \bfA$ can achieve is larger than the smallest error than TSVD-$\bfA$ can achieve.  This is to be expected since $\widehat\bfA$ is an approximation of $\bfA.$  Although the semi-convergence phenomenon for TSVD-$\widehat \bfA$ occurs later and errors grow slower than for TSVD-$\bfA$, both TSVD solutions require the selection of a good choice for the rank.  Since training data is available, one could use the training data to select an ``optimal'' rank for TSVD-$\bfA$ and TSVD-$\widehat \bfA$, i.e., the rank which provides the minimal sample mean error for the training set.  This is equivalent to the opt-TSVD approach from \cite{Chung2011}. For TSVD-$\bfA$ and TSVD-$\widehat \bfA$, the minimal sample mean error for the training set occurs at ranks 51 and 54 (consistent with validation set), respectively.  

\begin{figure}[bthp]
\begin{center}
\includegraphics[width=0.9\textwidth]{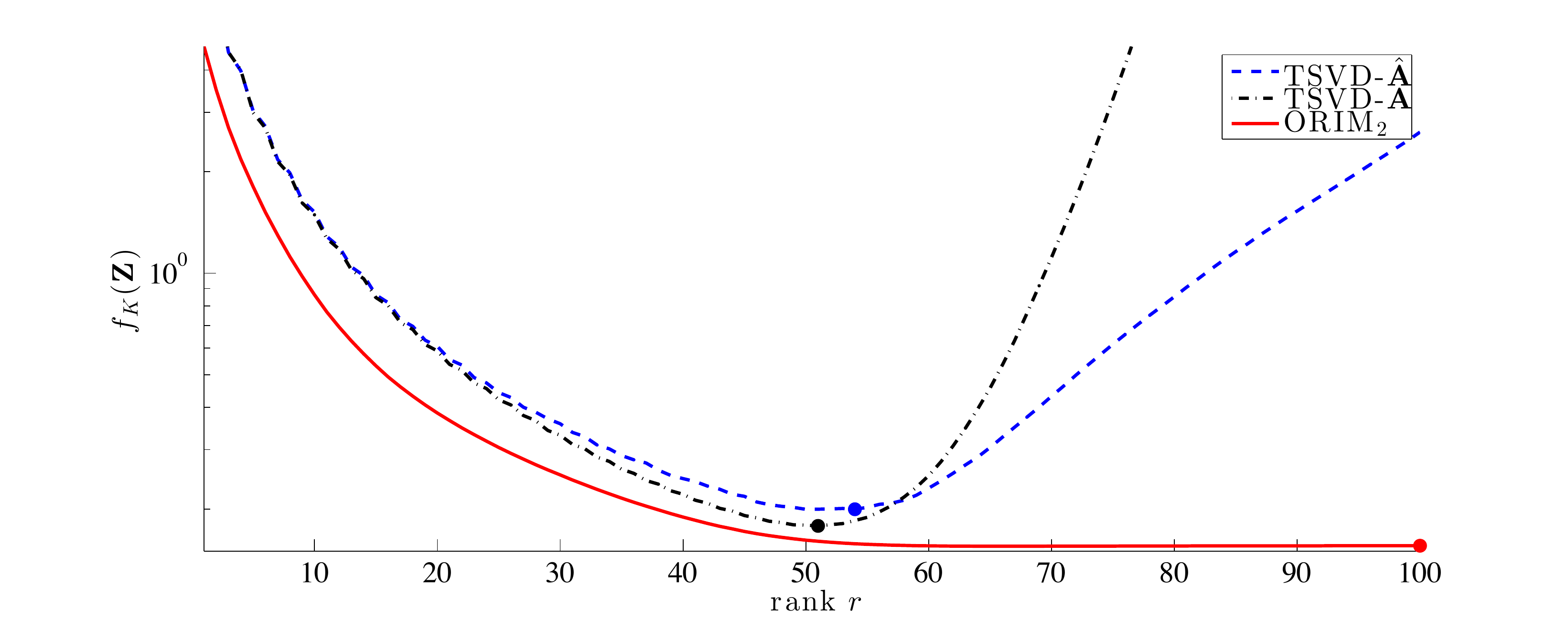}
\end{center}
\caption{Sample mean error $f_K$ on a logarithmic scale for $\rm ORIM_{2}$, TSVD-$\bfA$, and TSVD-$\widehat \bfA$ for the validation signals for various ranks. The dots correspond to the minimal sample mean error for the training data.}
\label{fig:rankcompareORIM2TSVD_1D}
\end{figure}

In Figure~\ref{fig:boxplots1}, we provide box-and-whisker plots to describe the statistics of the sample errors for the validation set, $ e_k(\bfZ) = \frac{1}{2} \norm[2]{\bfZ \bar \bfb^{(k)} - \bar \bfxi^{(k)}}^2$ for $k = 1,\ldots,\bar K$.  Each box provides the median error, along with the 25$^{\rm th}$ and 75$^{\rm th}$ percentiles.  The whiskers include extreme data, and outliers are plotted individually.  For TSVD-$\bfA$ and TSVD-$\widehat \bfA$, errors correspond to the ranks computed using the opt-TSVD approach ($r$ = 51 and 54 respectively), and errors for $\rm ORIM_{2}$ correspond to maximal rank 100.  The average of the errors $e_k(\bfZ)$ correspond to the dots in Figure~\ref{fig:rankcompareORIM2TSVD_1D}.

\begin{figure}[bthp]
\begin{center}
\includegraphics[width=\textwidth]{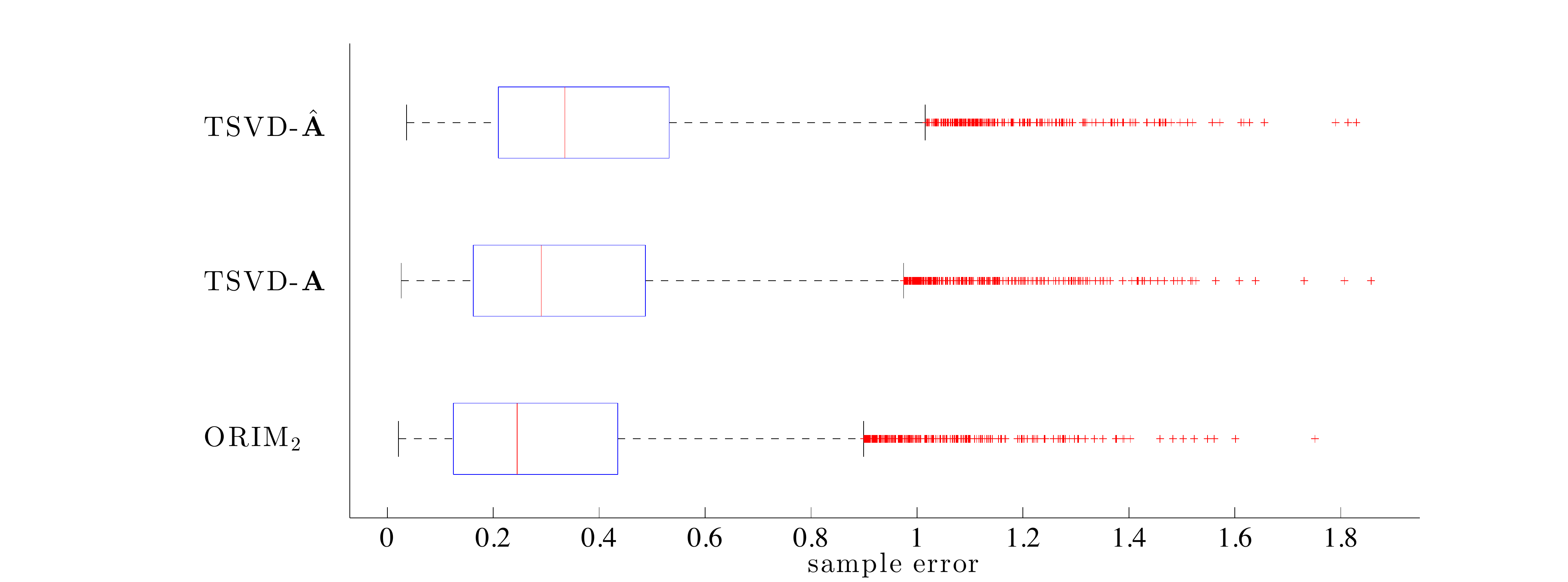}
\end{center}
\caption{Box plots of sample errors for the validation set used in Experiment 1.  All results use the 2-norm.}
\label{fig:boxplots1}
\end{figure}

Next, we investigate how the errors for ORIM$_2$ vary for different numbers of training signals. That is, for increasing numbers of training signals, we compute the $\rm ORIM_{2}$ matrix and record sample errors $e_k(\widehat \bfZ_2),$ for both the training set and the validation set.   It is worthwhile to mention that the rank of $\widehat \bfZ_2$ is inherently limited by the number of training signals and the signal length, i.e., $r \leq \min\{K,n\}$.  For this example, we have $n=150,$ and we select the rank of $\widehat \bfZ_2$ to be
\begin{equation}
	 r = \left\{
  \begin{array}{ll}
    K  & \mbox{for } K < 50,\\
    50 & \mbox{for } K \ge 50\,.
  \end{array}
\right.
\end{equation}
Notice that for rank $r= 50$ with a signal length of $n = 150$, we have a total of $2\cdot r\cdot n = 15,000$ unknowns (for $\bfX$ and $\bfY$).  In Figure~\ref{fig:pareto} we provide the median errors along with the 25$^{\rm th}$ to 75$^{\rm th}$ percentiles.  These plots are often referred to as Pareto curves in the literature.
 
We observe that for the number of training signals $K < 50,$ the ORIM$_2$ matrix is significantly biased towards the training set.  This is evident in the numerically zero computed errors for the training set.  In addition, we expect the errors for the training set to increase and the errors for the validation set to decrease as more training signals are included in the training set, due to a reduction in bias towards the training set. Eventually, errors for both signal sets reach a plateau, where additional training data do not significantly improve the solution. The point at which the errors begin to stabilize indicates a natural choice for the number of training signals to use, e.g., when the relative change in errors reaches a predetermined tolerance. For this example a relative improvement of $10^{-3}$ for the errors for the training set is reached at $1,000$ training images.  

There is an interesting phenomena that occurs in the reconstruction errors for the validation set, whereby the error peaks around $K = 150$.  Although puzzling at first, the peak in errors is intuitive when one considers the problem that is being solved.  
For $K= 150$ matrix $\bfB$ is $150 \times 150.$  If $\bfB$ is full rank ($s = 150$), $\bfP=\bfC$ in Theorem~\ref{thm:Sondermann} and problem~\eqref{eq:min2norm} has the solution,
\begin{align*}
 	\widehat \bfZ_2 &= \argmin_{\rank{\bfZ}\leq r} \frac{1}{K}\norm[\rm F]{\bfZ\bfB -\bfC}^2 \\
&	= \bfC_{r} \bfB^{-1},
 \end{align*} 
which is the special case considered by Eckart-Young and Schmidt \cite{Eckart1936,Schmidt1907}
Notice that $\widehat \bfZ_2$ is uniquely determined by the training set, so reconstructions at this point will be most biased to the training set. 

% rank of $widehat \bfZ$ is limited by the nubmer of training images.  For ranks less than 50 we just compute the higheset rank Z we can adn for ranks greater than 50 we use rank50 matrix.
% For training data up to 50, we recontruct training data extremely well so very biased.  For validation data, we initially see reduction because validation data is similar to training.  However, closer to rank 50, the bias towards the training data maximizes.
% For $K= 150,$ $\bfB\in\bbR^{150 \times 150}$ in Theorem~\ref{thm:Sondermann} is square so $\bfP = \bfC^{150 \times 150}$.  If $\bfB$ is full rank (s = n), then we have $\widehat \bfZ = \bfC_{50} \bfB^{-1}$.  The reconstruction of a training signal would give $\bfC_{50} \bfB^{-1} \bfb = \bfC_{50} \bfi^{(k)}$ where $\bfi^{(k)}$ is the $k$th column of the identity matrix so the $k$th column of $\bfC_{50}$ is $\bfxi^{(k)}$.  Now suppose we have a validation signal $\bar\bfb$ and we reconstruct the validation signal, then $\bfC_{50} \bfB^{-1} \bar\bfb$
% Too biased by the training data, even for less than 150 training
% for number of training images 50, why can we get a good reconstruction of the validation data?

\begin{figure}[bthp]
\begin{center}
\includegraphics[width=\textwidth]{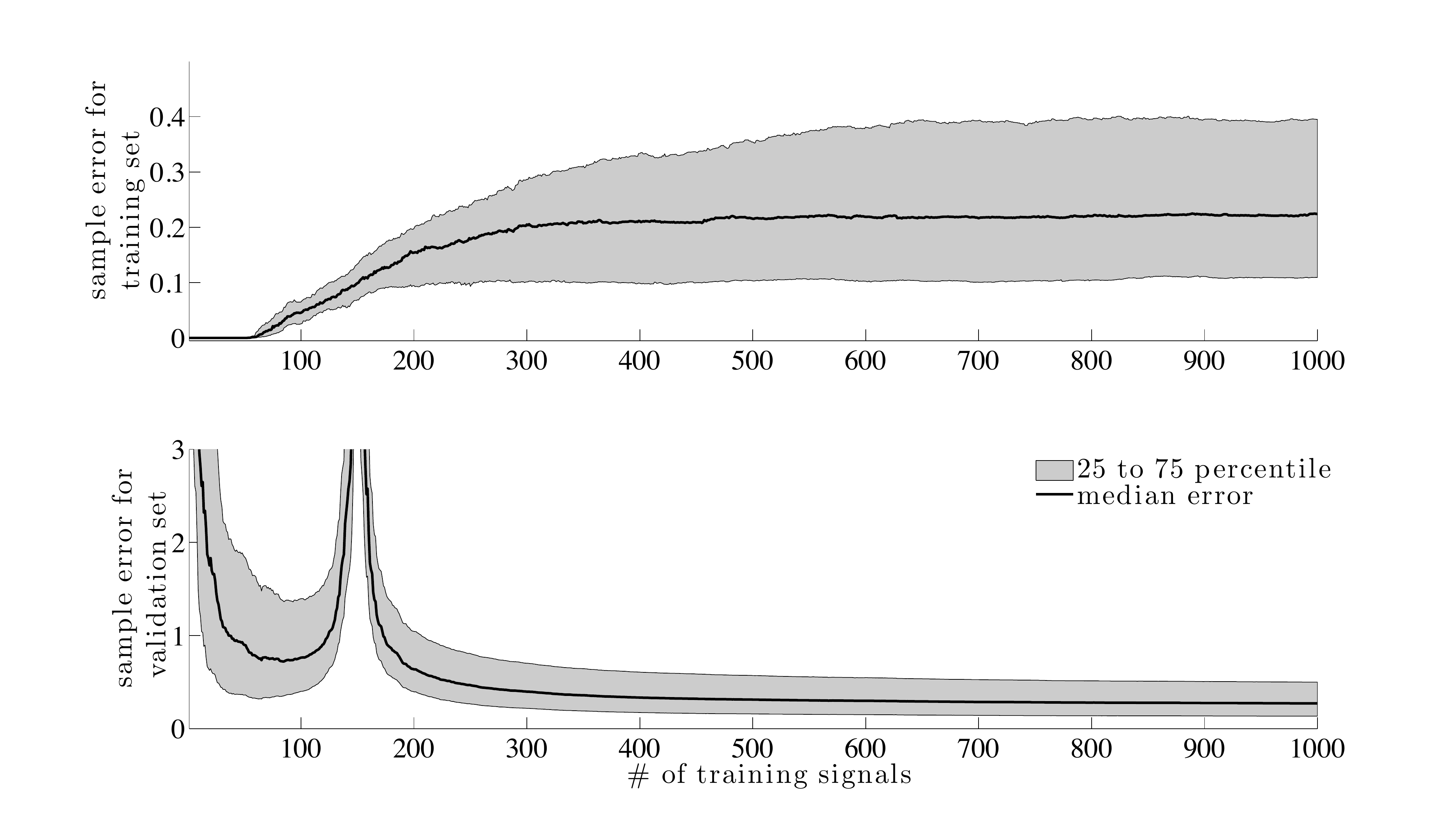}
\end{center}
\caption{Pareto curves.  The curves in the upper figure correspond to sample errors for the training set and the curves in lower figure correspond to sample errors for the validation set.  Here we plot the median values along with the 25$^{\rm th}$ and 75$^{\rm th}$ percentiles of the errors as a function of the number of training signals used to construct $\widehat \bfZ_2$.}
\label{fig:pareto}
\end{figure}

\paragraph{Experiment 2.} 
For this experiment, we use the same setup as described in Experiment~1 for the 1D example, but we compare results obtained by using different error measures $\rho$ in equation~\eqref{eqn:empiricalmin}.  In particular, we consider $\rho = \frac{1}{p}\norm[p]{\,\cdot\,}^p$ for $p=1.2, 2,$ and $5.$  For $p=1.2,$ we use additional smoothing to deal with non-differentiability. We compute an optimal low-rank regularized inverse matrix $\widehat \bfZ_p$ for each $p$.  The computed matrices are then used to reconstruct the validation set.  We refer to these approaches as $\rm ORIM_{1.2}, \rm ORIM_{2}$ and $\rm ORIM_{5}$, respectively.  
Box-and-whisker plots of the sample errors for the validation signals are presented in Figure~\ref{fig:boxplots2}, along with errors for TSVD-$\widehat \bfA$.  In each subfigure, the reconstruction error was computed using a different error measure.  As expected, the reconstructions with smallest error correspond to $\widehat \bfZ_p,$ where $p$ defines the error measure used for the training set.
\begin{figure}[bthp]
\begin{center}
\includegraphics[width=\textwidth]{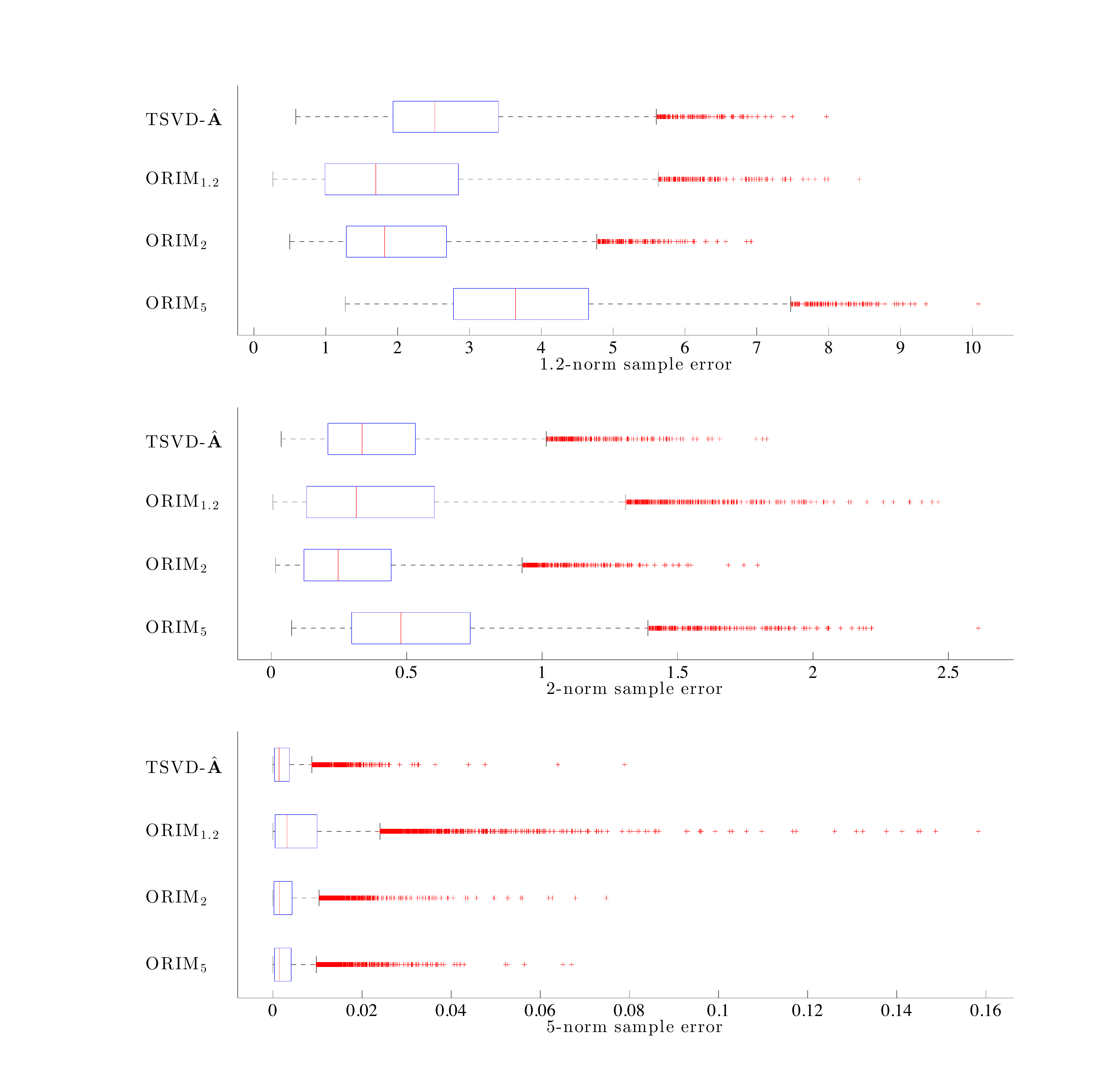}
\end{center}
\caption{Box plots of sample errors for the validation set used in Experiment 2.  Sample errors were computed using $\rho=\frac{1}{p}\norm[p]{\,\cdot\,}^p$ where $p =1.2, 2,$ and $5$, respectively.}
\label{fig:boxplots2}
\end{figure}

\begin{figure}[bthp]
\begin{center}
\includegraphics[width=\textwidth]{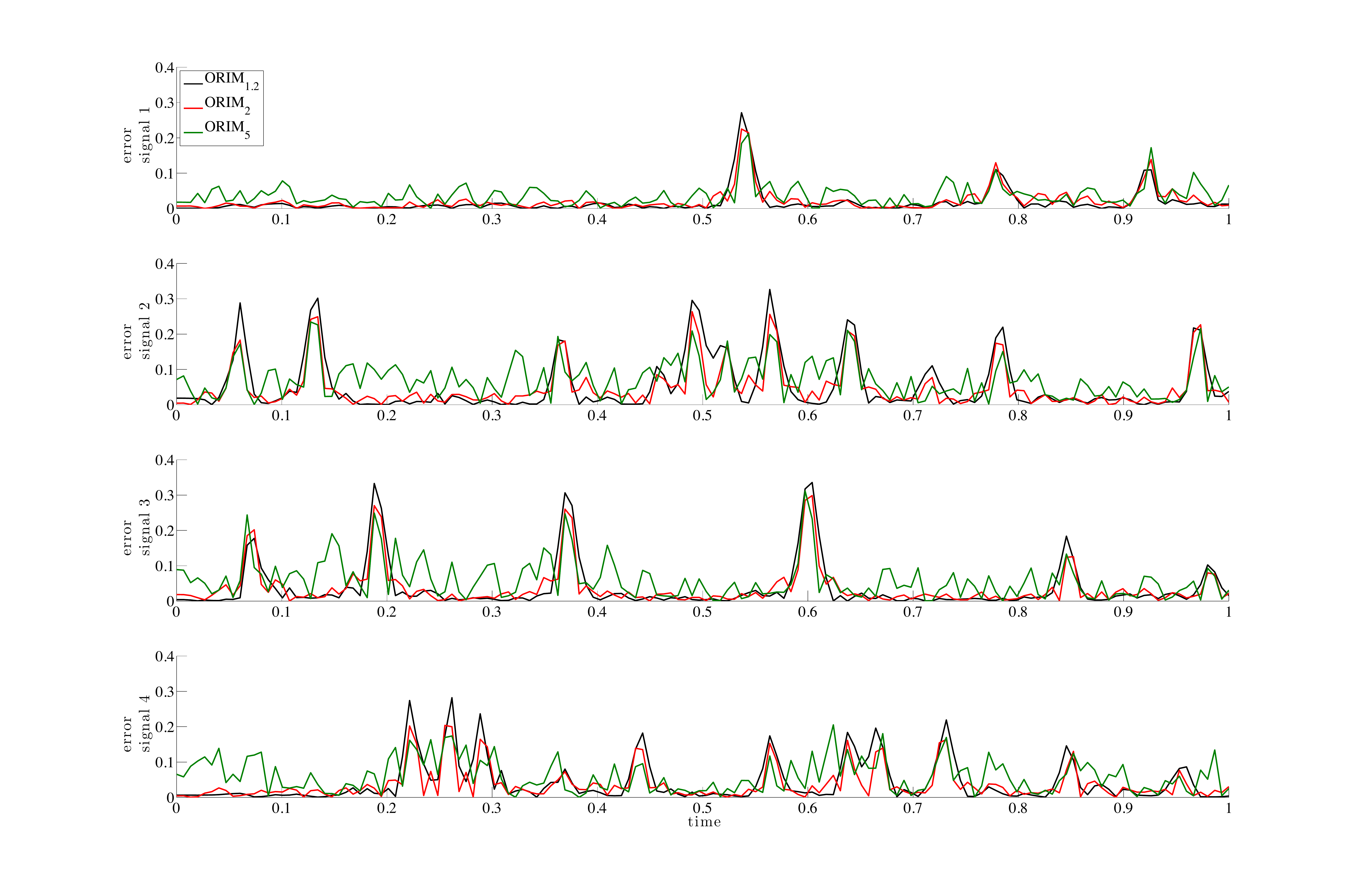} 
\end{center}
\caption{Absolute reconstruction errors for validation signals presented in Figure~\ref{fig:sampleSignal}.}
\label{fig:1Dreconstructions}
\end{figure}
We emphasize that even though the sample errors for the various methods may appear similar in value in the box plots, the reconstructions are significantly different, as seen in the absolute errors.  Plots of the absolute values of the error vectors, $\widehat \bfZ_p \bar\bfb^{(k)} - \bar\bfxi^{(k)},$ for the four validation signals presented in Figure~\ref{fig:sampleSignal} can be found in Figure~\ref{fig:1Dreconstructions}.  We observe that $\rm ORIM_{5}$ allows for large reconstruction errors throughout the signal without major peaks, while absolute reconstruction errors for $\rm ORIM_{1.2}$ tend to remain small in flat regions, with only a few high peaks in the error at locations corresponding to discontinuities in the signal. These observations are consistent with norms in general and illustrate the potential benefits of using a different error measure.

\subsection{Two-dimensional example} % (fold)
\label{sub:two_dimensional_examples}
Next, we describe a problem from image deblurring, where the underlying model can be represented as~\eqref{eq:inverseproblem} where matrix $\bfA$ represents a blurring process, and $\bfxi$ and $\bfb$ represent the true and observed images respectively.  Entries in matrix $\bfA$ depend on the point spread function (PSF).  For image deblurring examples, matrix $\bfA$ is typically very large and sparse.  However, under assumptions of spatial invariance and assumed boundary conditions and/or properties of the PSF, structure in $\bfA$ can be exploited and fast transform algorithms can be used to efficiently diagonalize $\bfA$ \cite{Hansen2006}.  For example, if the PSF satisfies a double symmetry condition and reflexive boundary conditions on the image are assumed, then $\bfA$ can be diagonalized using the 2D discrete cosine transform (DCT).  Of course, these approaches rely on knowledge of the exact PSF.  

Blind deconvolution methods have been developed for problems where the PSF is not known or partially known \cite{Campisi2007}; however, most of these approaches require strict assumptions on the PSF such as spatial invariance or parameterization of the blur function, or they require a good initial estimate of the PSF. 
Our approach uses training data and does not require knowledge of $\bfA$ and, hence, knowledge of the PSF.  Furthermore, we do not require any assumptions regarding spatial invariance of the blur, boundary conditions, or special properties of the blur (e.g., separability, symmetry, etc.).  We refer the reader to \cite{Chung2013a} for comparisons to blind deconvolution methods.

\paragraph{Experiment 3.} 
\begin{figure}[bthp]
\begin{center}
\includegraphics[width=\textwidth]{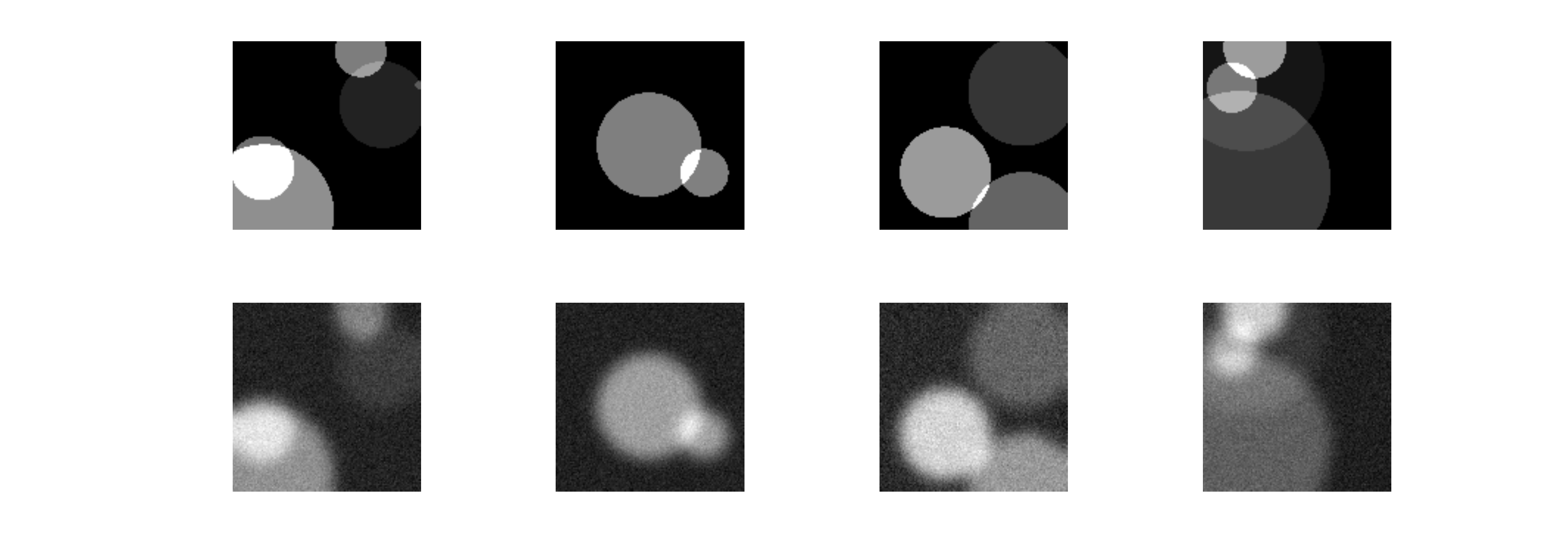}
\end{center}
\caption{Sample true images and corresponding observed, blurred and noisy images that were used for training in Experiment 3.}
\label{fig:training}
\end{figure}
In this experiment, we generate a set of $6,000$ training images.  Each image has $128 \times 128$ pixels and represents a collection of circles, where the number of circles, location, radius and intensity were all randomly drawn from a uniform distribution.  In this example, the PSF was a 2D Gaussian kernel with zero mean and variance 5. We assume reflexive boundary conditions for the image, and Gaussian white noise was added to each image, where the noise level was randomly selected between $0.1$ and $0.15$.  Sample true images along with the corresponding blurred, noisy image can be found in Figure~\ref{fig:training}.  A validation set of $7,000$ images was generated in the same manner.
\begin{figure}[bthp]
\begin{center}
\includegraphics[width=\textwidth]{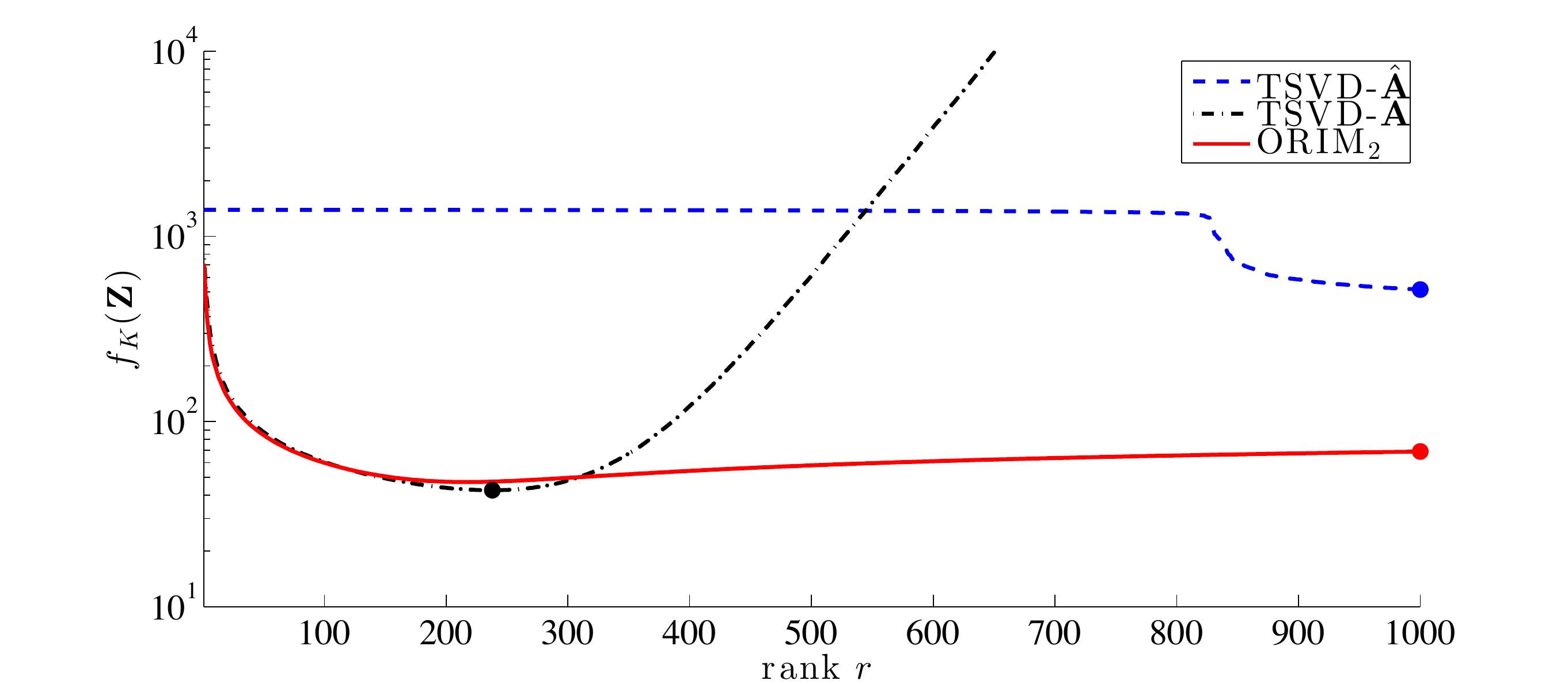}
\end{center}
\caption{Sample mean error for ORIM$_2$, TSVD-$\bfA$, and TSVD-$\widehat \bfA$ in logarithmic scale for validation data in Experiment~3.  The dots correspond to the minimal sample mean error for the training data.}
\label{fig:errors2}
\end{figure}

We compare the sample mean errors for the validation set for $\rm ORIM_{2}$, TSVD-$\bfA$, and TSVD-$\widehat \bfA$, where the maximal rank for $\widehat \bfA$ was $\bar r = 1,000$.  The sample mean errors are provided in Figure~\ref{fig:errors2} for various ranks.  Dots correspond to minimal sample mean error for the training data.  As expected, TSVD-$\bfA$ exhibits clear semi-convergence behavior, where the minimum error is achieved at rank $238$ (for both the training and validation sets).  In contrast, ORIM$_2$ maintains low sample mean errors for various ranks, and only exhibits slight semiconvergence for larger ranks.  This can be attributed to bias towards the training data, since for larger ranks, more training data is required to compensate for more unknowns.  
We observe that the relative difference between the lowest sample mean error for ORIM$_2$ and the lowest sample mean error for TSVD-$\bfA$ is approximately $10\%$.  That is, even without knowledge of $\bfA$, ORIM$_2$ can produce reconstructions that are comparable to TSVD-$\bfA$ reconstructions.

Next, we compare the distribution of the sample errors for ORIM$_2$ and TSVD-$\widehat \bfA$, corresponding to rank $1,000.$  Density plots can be found in Figure~\ref{fig:density2D}.  Among the methods that only require training data, ORIM$_2$ is less sensitive than TSVD-$\widehat \bfA$, implying that it is better to seek a low rank regularized inverse matrix directly, versus seeking a low rank approximation to $\bfA$ and then inverting it.

\begin{figure}[bthp]
\begin{center}
\includegraphics[width=\textwidth]{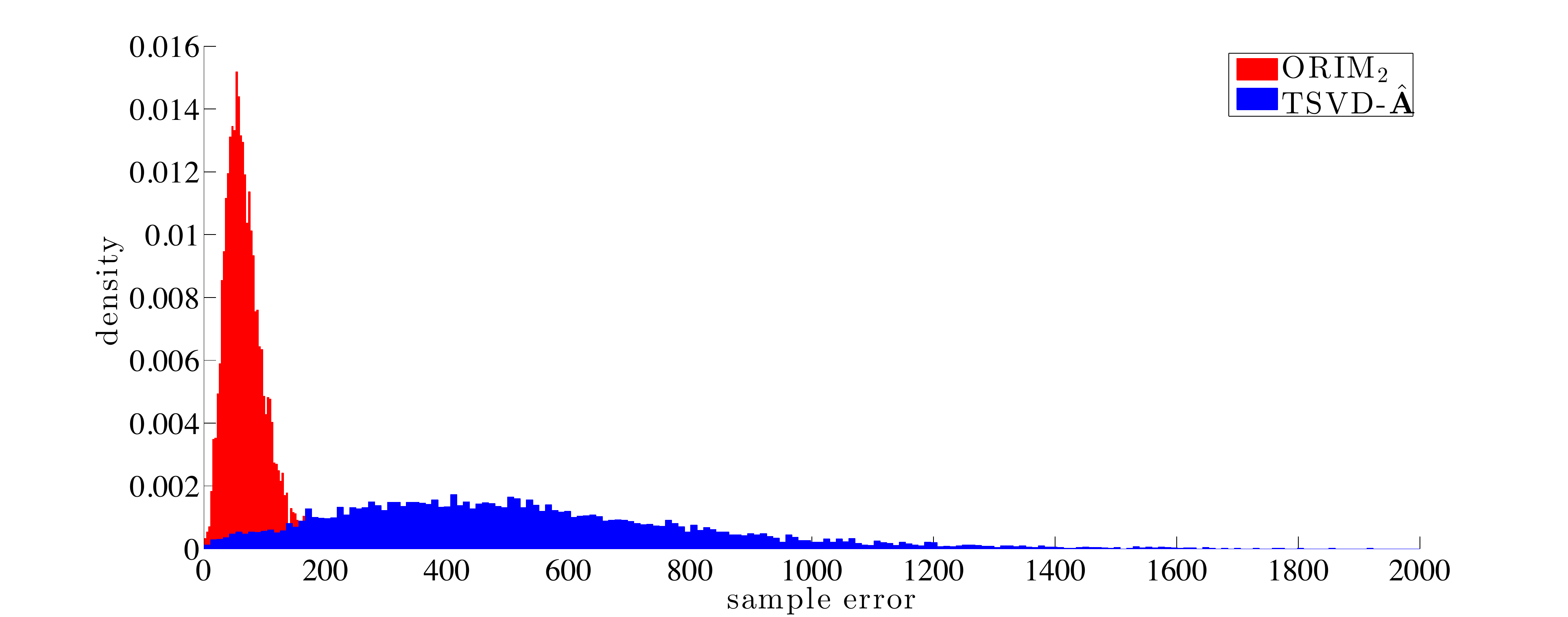}
\end{center}
\caption{Density of sample errors for ORIM$_2$ and TSVD-$\widehat \bfA$ for Experiment 3.}
\label{fig:density2D}
\end{figure}

One of the validation images, with corresponding observed image can be found in the first column of Figure~\ref{fig:reconstructions}.  Reconstructions for ORIM$_2$, TSVD-$\bfA$, and TSVD-$\widehat \bfA$, and corresponding absolute error images between the reconstructed image and the true image (with inverted colormap so that white corresponds to zero error) are provided in Figure~\ref{fig:reconstructions} (top and bottom row respectively).  
\begin{figure}[bthp]
\begin{center}
\includegraphics[width=\textwidth]{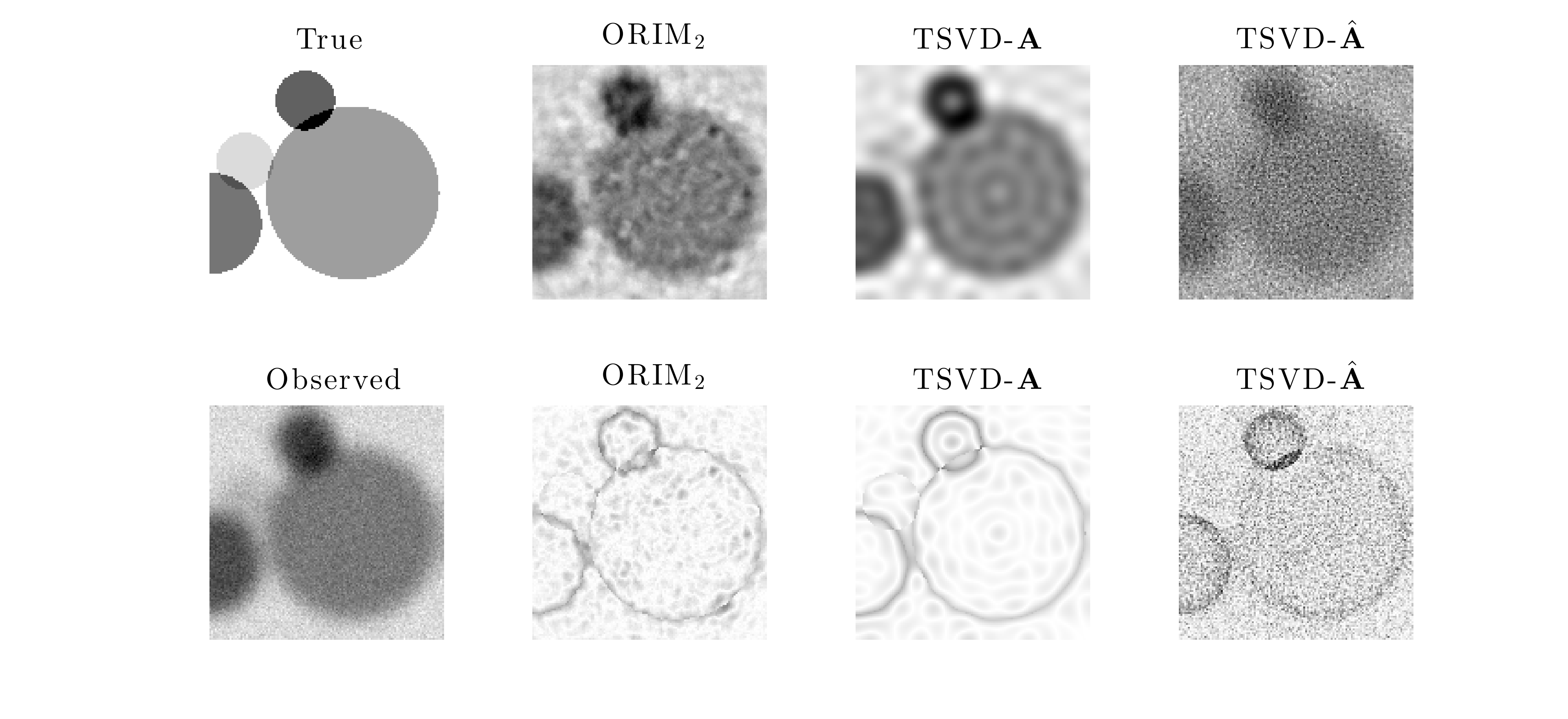}
\end{center}
\caption{Image reconstructions for four validation images for Experiment 3.  The bottom row presents absolute error images between the reconstructed image and the true image.}
\label{fig:reconstructions}
\end{figure}

% section numerics (end)

\section{Conclusions} \label{sec:conclusions} % (fold)
In this paper, we described a new framework for solving inverse problems where the forward model is not known, but training data is readily available.  We considered the problem of finding an optimal low-rank regularized inverse matrix that, once computed, can be used to solve hundreds of inverse problems in real time.  We used a Bayes risk minimization framework to provide underlying theory for the problem.  For the empirical Bayes problem, we proposed a rank-$\ell$ update approach and showed that this approach is optimal in the Bayes framework for the $2$-norm.  Numerical results demonstrate that our approach can produce solutions which are superior to standard methods like TSVD, but without full knowledge of the forward model.  The examples illustrate the benefits of using different error measures and the potential for use in large-scale imaging applications.

% Future work:
% We remark that there are other possibilities for imaging examples such as using an initial approximation and then updating that reconstruction matrix.

% section conclusions (end)

\section*{Acknowledgment}
% \addcontentsline{toc}{section}{Acknowledgment}
We are grateful to Dianne P. O'Leary for helpful discussions.

% \section*{References}

\bibliography{orim}

\end{document}